\documentclass[11pt]{amsart}

\usepackage[margin=1.1in]{geometry}
\usepackage{amsmath,amssymb,amsthm,mathtools}
\usepackage{enumitem}
\usepackage[colorlinks=true,linkcolor=blue,citecolor=blue,urlcolor=blue]{hyperref}

\numberwithin{equation}{section}

\newtheorem{theorem}{Theorem}[section]
\newtheorem{proposition}[theorem]{Proposition}
\newtheorem{lemma}[theorem]{Lemma}
\newtheorem{corollary}[theorem]{Corollary}
\theoremstyle{definition}
\newtheorem{definition}[theorem]{Definition}
\newtheorem{assumption}[theorem]{Assumption}

\theoremstyle{remark}
\newtheorem{remark}[theorem]{Remark}

\newcommand{\R}{\mathbb R}
\newcommand{\C}{\mathbb C}

\newcommand{\calA}{\mathcal A}

\newcommand{\calK}{\mathcal K}
\newcommand{\calP}{\mathcal P}
\newcommand{\calQ}{\mathcal Q}

\newcommand{\PSH}{\operatorname{PSH}}
\newcommand{\MA}{\operatorname{MA}}
\newcommand{\Conv}{\operatorname{Conv}}
\newcommand{\ddc}{dd^c}
\newcommand{\supp}{\operatorname{supp}}

\newcommand{\bOmega}{\overline{\Omega}}

\title[Jensen Deficits]{Jensen Deficits for Inhomogeneous \\ Monge--Amp\`ere Dirichlet Problems}

\author{Frank Wikström}
\address{Center for Mathematical Sciences, Lund University, Box 118, SE-221 00 Lund, Sweden}
\email{frank.wikstrom@math.lth.se}

\subjclass[2020]{Primary 35J96, 32W20; Secondary 32U15, 46A55, 52A41, 31C45}
\keywords{Jensen measures; Edwards theorem; Perron envelopes; Monge--Amp\`ere equations; Alexandrov subsolutions; plurisubharmonic functions; Bedford--Taylor theory; Dirichlet problems}

\begin{document}

\begin{abstract}
We develop an inhomogeneous form of Edwards' Jensen-measure duality for
Perron envelopes constrained by Monge--Amp\`ere lower bounds.  The
admissible subsolution families are convex but not cones; nevertheless,
the dual measures remain the homogeneous Jensen measures, and the
right-hand side enters through a scalar Jensen deficit
\[
        B_{\calA}(x,\mu)
        =
        \inf_{u\in\calA}
        \left(\int_{\partial\Omega}u\,d\mu-u(x)\right).
\]
Under natural structural hypotheses we prove a boundary dual formula
\[
        \sup\{u(x):u\in\calA,\ u\leq\varphi\text{ on }E\}
        =
        \inf_{\mu\in J_x^\partial}
        \left(
        \int_{\partial\Omega}\varphi\,d\mu
        -
        B_{\calA}(x,\mu)
        \right).
\]
We apply the theorem to real Alexandrov subsolutions and to complex
Bedford--Taylor plurisubharmonic subsolutions with continuous density.
In one real dimension the deficit is the Green-potential correction; in
higher dimensions it has intrinsic stress and current interpretations.
On B-regular domains, a bounded Bedford--Taylor approximation theorem
identifies bounded and continuous competitors and yields a duality proof
of continuity for the corresponding Dirichlet solution.  Finally, for
smooth strictly elliptic solutions, optimal Jensen measures are the
harmonic measures of the linearized Monge--Amp\`ere operators,
equivalently the boundary derivatives of the nonlinear solution map.
\end{abstract}

\maketitle

\section{Introduction}

Edwards' theorem, introduced in the setting of Choquet boundary theory
\cite{Edwards:66}, is one of the basic duality principles behind Jensen
measure representations of Perron envelopes.  In a typical form, if
$X$ is compact, $\mathcal F$ is a cone of upper semicontinuous functions
on $X$ containing the constants, and $\varphi$ is lower semicontinuous,
then
\[
        \sup\{u(x):u\in\mathcal F,\ u\leq \varphi\}
        =
        \inf_{\mu\in J_x(\mathcal F)}
        \int_X \varphi\,d\mu,
\]
where
\[
        J_x(\mathcal F)
        =
        \left\{
        \mu\in\calP(X):
        u(x)\leq \int_X u\,d\mu
        \text{ for all }u\in\mathcal F
        \right\}.
\]
For subharmonic, plurisubharmonic, and convex functions, this gives a
measure-theoretic representation of homogeneous Perron envelopes and of
solutions to homogeneous Dirichlet problems.

The cone hypothesis is precisely what fails for inhomogeneous
Monge--Amp\`ere Dirichlet problems.  The motivating example is the real
Monge--Amp\`ere envelope
\[
        P_{\nu}\varphi(x)
        =
        \sup\left\{
        u(x):
        u\in C(\bOmega)\cap \Conv(\Omega),\
        \MA_{\R}(u)\geq \nu,\
        u|_{\partial\Omega}\leq \varphi
        \right\}.
\]
The admissible family
\[
        \calA_\nu
        =
        \{u\in C(\bOmega)\cap\Conv(\Omega):\MA_{\R}(u)\geq \nu\}
\]
is convex and invariant under addition of affine functions, but it is
not a cone: rescaling a subsolution changes the right-hand side.  Thus
the classical Jensen-measure theorem does not directly represent the
corresponding Perron envelope.

The main point of this paper is that the right-hand side should not be
absorbed by changing the Jensen measures.  The homogeneous Jensen
measures remain the correct dual objects, and the inhomogeneity enters
through a scalar deficit.  For an admissible class $\calA$ define
\[
        B_{\calA}(x,\mu)
        =
        \inf_{u\in\calA}
        \left(
        \int_{\partial\Omega}u\,d\mu-u(x)
        \right),
\]
the least Jensen gap imposed by membership in $\calA$.  In the boundary
form relevant below, the abstract separation theorem
(Theorem~\ref{thm:abstract-edwards}) gives
\[
        P_{\calA}\varphi(x)
        =
        \inf_{\mu\in J_x^\partial}
        \left(
        \int_{\partial\Omega}\varphi\,d\mu
        -
        B_{\calA}(x,\mu)
        \right).
\]
For a homogeneous cone the deficit vanishes and this reduces to the
usual Edwards representation.

We apply this theorem in two parallel Monge--Amp\`ere settings: real
Alexandrov subsolutions with $\MA_{\R}(u)\geq f\,dx$
(Theorem~\ref{thm:real}) and complex Bedford--Taylor
plurisubharmonic subsolutions with $(\ddc u)^n\geq f\,dV$
(Theorem~\ref{thm:psh}).  In both cases the structural hypotheses come
from concavity of the determinant root, with Dinew's mixed
Monge--Amp\`ere inequality \cite[Theorem~1.3]{Dinew:09} supplying the
complex Bedford--Taylor case.
When $f=0$, the formulas specialize to the classical homogeneous Jensen
representations.

For convex functions, the Jensen measures themselves have a particularly
transparent interpretation.  If $K\subset\R^n$ is compact and convex and
$\mathcal F$ is the cone of continuous convex functions on $K$, then
$J_x(\mathcal F)$ is exactly the set of probability measures on $K$ with
barycenter $x$: affine functions force the barycenter condition, and
Jensen's inequality gives the converse.  Thus, in the convex case,
Jensen measures are the familiar representing measures of Choquet theory
\cite{Alfsen:71,Phelps:01}.  The boundary Jensen measures used below are
the corresponding representing measures whose support is restricted to
$\partial\Omega$.

In pluripotential theory, Jensen measures have become a flexible
language for plurisubharmonic envelopes, boundary regularity, and
approximation.  Sibony's work on $B$-regular domains~\cite{Sibony:87},
Poletsky's disc and current methods \cite{Poletsky:91,Poletsky:93}, and
the envelope theory of L{\'a}russon--Sigurdsson~\cite{LarussonSigurdsson:98}
are central points of reference.  Jensen measures also give useful
descriptions of hyperconvexity, boundary values, and approximation of
plurisubharmonic functions; see for
instance~\cite{CarlehedCegrellWikstrom:99,DieuWikstrom:05,Wikstrom:01}.
Ransford's survey~\cite{Ransford:01} is a good entry point into this
circle of ideas.

Two related recent developments are worth noting.  Nilsson--Wikstr\"om
treat variants of Edwards' theorem for cones that need not contain all
constants~\cite{NW}.  In another direction, di Nezza and Rashkovskii
study rooftop envelopes and geodesics for plurisubharmonic
functions~\cite{diNezza}, proving Edwards-type representation theorems
for $\omega$-plurisubharmonic envelopes on K\"ahler manifolds.  Since
the $\omega$-plurisubharmonic functions do not form a cone, their dual
objects are pairs consisting of a measure and an additive constant
adapted to the K\"ahler form~$\omega$; here we instead keep the ordinary
Jensen measures and place the inhomogeneity in the scalar deficit.

The deficit also has more concrete interpretations.  In one real
dimension it is the Green potential of the right-hand side.  In higher
dimensions, stress and current formulations give intrinsic lower bounds:
their inequalities are proved in the smooth or approximable settings
indicated there, while the Euler--Lagrange identifications are formal.
When a smooth strictly elliptic solution is given, the final section
identifies the optimal dual Jensen measures with harmonic measures for
the linearized Monge--Amp\`ere operator, or equivalently with the
boundary derivative of the nonlinear solution map.

This point of view is complementary to the nonlinear potential theory of
Harvey and Lawson
\cite{HarveyLawsonDirichlet:09,HarveyLawsonRiemannian:11}, whose
Dirichlet duality gives a broad comparison and existence framework for
closed degenerate elliptic subequations.  Here the aim is narrower: an
Edwards-type representation of a Perron envelope in which the
inhomogeneous constraint appears through the scalar deficit
$B_{\calA}$.  The stress and current representations are also close in
spirit to Duval--Sibony's current-theoretic duality and to
Harvey--Lawson's positive-current/Jensen-measure duality
\cite{DuvalSibony:95,HarveyLawsonCurrents:09}; the additional feature is
the determinant cost attached to the Monge--Amp\`ere lower bound.

The concrete theorems are proved for continuous densities,
$\nu=f\,dx$ in the real case and $\nu=f\,dV$ in the complex case.  The
measure notation above indicates the guiding problem; singular
right-hand sides require additional approximation or finite-energy
hypotheses and are left for later work.  Likewise, the purpose is not to
replace the classical Alexandrov and Bedford--Taylor existence and
regularity theories
\cite{BedfordTaylor:76,Blocki:96,Gutierrez:01}, but to supply a
Jensen-measure dual formula for inhomogeneous envelopes.  The bounded
Bedford--Taylor approximation theorem is used to identify bounded and
continuous competitors on $B$-regular domains and gives a duality proof
of continuity for the corresponding Dirichlet solution.

The paper is organized as follows.  We first prove the abstract
separation theorem and record some elementary consequences.  A
one-dimensional model then identifies the deficit with the Green
potential of the right-hand side.  The real Alexandrov formula and the
complex plurisubharmonic formula are proved next for continuous
densities, followed in the complex case by the bounded Bedford--Taylor
approximation theorem.  The stress and current subsections give the
intrinsic interpretation of the deficit, and the final section treats
linearized harmonic measures.

\subsection*{Acknowledgments}
The author used ChatGPT~5.5 during the preparation of this paper for
mathematical brainstorming, proof auditing, and expository revision.
All arguments, references, and final text were reviewed and verified by
the author, who takes full responsibility for the content of the paper.

The author would also like to thank Mårten Nilsson for valuable comments on an
earlier draft.

\section{The Abstract Inhomogeneous Edwards Theorem}

The separation argument in this section does not use convexity of the
domain, or any specific Monge--Amp\`ere structure.  We formulate it for
a compact constraint set~$E$, a distinguished point~$x$ at which
competitors are evaluated, and a homogeneous convex cone $\mathcal F$ defining
the Jensen measures.  In the Dirichlet problem applications $E$ will usually be
$\partial\Omega$ and $x\in\Omega$.

All functions below are assumed to have a well-defined value at $x$ and
a bounded upper semicontinuous trace on $E$, denoted by $u_E$.  For a
function which is continuous up to $E$, this trace is just the usual
restriction and for plurisubharmonic applications it may be the upper
semicontinuous boundary regularization.  Given a
cone $\mathcal F$ of such functions, we define the corresponding Jensen
measures by
\[
        J_x^E(\mathcal F)
        =
        \left\{
        \mu\in\calP(E):
        v(x)\leq \int_E v_E\,d\mu
        \text{ for all }v\in\mathcal F
        \right\}.
\]
We shall use the standard Choquet-theoretic fact that if $h$ is bounded
upper semicontinuous on a compact Hausdorff space and $\lambda$ is a
positive finite Borel measure, then
\[
        \inf_{\substack{g\in C(E)\\ g\geq h}}
        \int_E g\,d\lambda
        =
        \int_E h\,d\lambda .
\]
Indeed, the continuous majorants of $h$ form a downward directed family
with pointwise infimum $h$, and monotone convergence for directed nets
gives the identity; see, for example, \cite[Chapter~1]{Phelps:01}.

\begin{definition}
Let $\calA$ be a nonempty family of functions with bounded upper
semicontinuous trace on $E$.  For $\varphi\in C(E)$ define
\[
        P_{\calA}\varphi(x)
        =
        \sup\{u(x):u\in\calA,\ u_E\leq \varphi\}.
\]
For a finite positive Borel measure $\mu$ on $E$, define
\[
        B_{\calA}(x,\mu)
        =
        \inf_{u\in\calA}
        \left(
        \int_E u_E\,d\mu-u(x)
        \right).
\]
\end{definition}

In order to prove our duality result, we need to make
the following assumptions on $\mathcal{A}$ and $\mathcal{F}$:

\begin{assumption}\label{ass:abstract}
The pair $(\calA,\mathcal F)$ together with the trace operation
$u\mapsto u_E$ satisfies:
\begin{enumerate}[label=\textup{(\roman*)}]
    \item $\calA$ is convex and
    \[
            (t u+(1-t)v)_E
            \leq
            t u_E+(1-t)v_E
    \]
    for $u,v\in\calA$ and $0\leq t\leq1$.
    \item $\calA$ is invariant under addition of constants, and
    $(u+a)_E=u_E+a$.
    \item If $u\in\calA$, $v\in\mathcal F$, and $t\geq0$, then
    $u+t v\in\calA$ and
    \[
            (u+t v)_E\leq u_E+t v_E.
    \]
\end{enumerate}
\end{assumption}

The convex set used in the separation argument is
\[
        \calK
        =
        \left\{
        \left(g,\ u(x)-s\right):
        u\in\calA,\ g\in C(E),\ g\geq u_E,\ s\geq 0
        \right\}
        \subset C(E)\times\R.
\]
The point $(\varphi,c)$ belongs to $\calK$ exactly when there is an
admissible $u$ with $u_E\leq\varphi$ and $u(x)\geq c$.  We shall
separate from the closure of $\calK$; no closedness hypothesis on
$\calK$ is needed.  The trace compatibility in Assumption~\ref{ass:abstract}
ensures that $\calK$ is convex.  Indeed, if
$g_i\geq (u_i)_E$, $s_i\geq0$, and $0\leq t\leq1$, then
$u=t u_1+(1-t)u_2$ belongs to $\calA$ and
\[
        u_E\leq t(u_1)_E+(1-t)(u_2)_E
        \leq t g_1+(1-t)g_2,
\]
while $t s_1+(1-t)s_2\geq0$.

\begin{lemma}[Boundary stability]\label{lem:boundary-stability}
Assume $\calA$ is invariant under addition of constants.  Then for all
$g_1,g_2\in C(E)$,
\[
        P_{\calA}g_1(x)
        \leq
        P_{\calA}g_2(x)
        +
        \|g_1-g_2\|_{L^\infty(E)}.
\]
Consequently,
\[
        |P_{\calA}g_1(x)-P_{\calA}g_2(x)|
        \leq
        \|g_1-g_2\|_{L^\infty(E)}.
\]
\end{lemma}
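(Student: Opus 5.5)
The plan is a direct shift argument using only invariance under addition of constants, together with the trace identity $(u+a)_E=u_E+a$ from Assumption~\ref{ass:abstract}(ii). That identity is needed to compare the boundary constraint of a shifted competitor; we take it as part of the meaning of ``invariant under addition of constants'' in the present statement. Set $\delta=\|g_1-g_2\|_{L^\infty(E)}$, which is finite because $g_1,g_2\in C(E)$ and $E$ is compact.

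Take any $u\in\calA$ with $u_E\leq g_1$, and put $w=u-\delta$. Then $w\in\calA$ and
\[
        w_E=u_E-\delta\leq g_1-\delta\leq g_2
\]
on $E$, so $w$ is a competitor in the envelope defining $P_{\calA}g_2(x)$. Hence $u(x)-\delta=w(x)\leq P_{\calA}g_2(x)$. Taking the supremum over all such $u$ gives the first inequality.

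One point needs care: the envelopes take values in $[-\infty,+\infty]$. If no $u\in\calA$ satisfies $u_E\leq g_1$, then $P_{\calA}g_1(x)=-\infty$ and the inequality holds trivially. Otherwise, the shift above shows that the competitor set for $g_2$ is nonempty. If $P_{\calA}g_1(x)=+\infty$, the same argument forces $P_{\calA}g_2(x)=+\infty$, and symmetrically. So the two envelopes are finite or infinite together, and the first inequality is meaningful in each case.

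Interchanging the roles of $g_1$ and $g_2$ gives the reverse inequality. Whenever the values are finite, the two inequalities combine into the Lipschitz estimate; in the degenerate cases we read it with the convention just described. I do not expect any real obstacle: the only subtle points are the trace compatibility $(u-\delta)_E=u_E-\delta$ and the bookkeeping for infinite values.
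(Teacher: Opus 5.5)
Your proof is correct and uses the same shift argument as the paper: subtract $\|g_1-g_2\|_{L^\infty(E)}$ from each competitor for $g_1$, use the trace compatibility $(u+a)_E=u_E+a$ (which the paper also uses implicitly), and conclude by symmetry. Your extra bookkeeping for infinite envelope values is a harmless refinement that the paper omits.
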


\begin{proof}
Let $\varepsilon=\|g_1-g_2\|_{L^\infty(E)}$.  If
$u\in\calA$ and $u_E\leq g_1$, then $u-\varepsilon\in\calA$ and
\[
        (u-\varepsilon)_E\leq g_2.
\]
Thus
\[
        u(x)-\varepsilon\leq P_{\calA}g_2(x).
\]
Taking the supremum over such $u$ gives the first inequality.  The
second follows by symmetry.
\end{proof}

\begin{theorem}[Abstract inhomogeneous Edwards theorem]
\label{thm:abstract-edwards}
Let $E$ be compact Hausdorff, let $x$ be a distinguished point, and let
$\varphi\in C(E)$.  Assume that $(\calA,\mathcal F)$ satisfies
Assumption~\ref{ass:abstract} and that $P_{\calA}\varphi(x)$ is finite.
Then
\[
        P_{\calA}\varphi(x)
        =
        \inf_{\mu\in J_x^E(\mathcal F)}
        \left(
        \int_E\varphi\,d\mu
        -
        B_{\calA}(x,\mu)
        \right).
\]
\end{theorem}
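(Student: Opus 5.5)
The inequality ``$\leq$'' is elementary, so I would dispose of it first. Let $u\in\calA$ with $u_E\leq\varphi$ and let $\mu\in J_x^E(\mathcal F)$. By definition of the deficit,
\[
u(x)\leq \int_E u_E\,d\mu - B_{\calA}(x,\mu)\leq \int_E\varphi\,d\mu-B_{\calA}(x,\mu).
\]
If $B_{\calA}(x,\mu)=-\infty$, the right-hand side is $+\infty$ and there is nothing to prove. Taking the supremum over $u$ and the infimum over $\mu$ gives $P_{\calA}\varphi(x)\leq\inf_\mu(\cdots)$.

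For ``$\geq$'' I would use Hahn--Banach separation in $C(E)\times\R$ applied to the convex set $\calK$ already introduced. Set $c=P_{\calA}\varphi(x)+\varepsilon$ with $\varepsilon>0$. The first task is to show that $(\varphi,c)\notin\overline{\calK}$, and this is where Lemma~\ref{lem:boundary-stability} enters. Suppose $(g,u(x)-s)\in\calK$ with $\|g-\varphi\|_\infty<\delta$ and $|u(x)-s-c|<\delta$. Then $u_E\leq g\leq \varphi+\delta$, so $u-\delta$ is admissible for $\varphi$. It follows that $u(x)-\delta\leq P_{\calA}\varphi(x)$, and hence $c-\delta< u(x)-s\leq P_{\calA}\varphi(x)+\delta$. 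This is impossible once $\delta\leq\varepsilon/2$.

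Strict separation of the point from the closed convex set $\overline{\calK}$ then gives a functional $(\lambda,\alpha)\in M(E)\times\R$ and a constant $\gamma$ with
\[
\int_E g\,d\lambda+\alpha\,(u(x)-s)\ \leq\ \gamma\ <\ \int_E\varphi\,d\lambda+\alpha c
\]
for all $(g,u(x)-s)\in\calK$. The next step is to pin down the signs using the structure of $\calK$. Since $g$ may be increased arbitrarily, $\lambda\leq0$. Since $s$ may be increased arbitrarily, $\alpha\geq0$. Adding constants $a$ to $u$ (Assumption~(ii)) moves $(g,u(x))$ to $(g+a,u(x)+a)$, which forces $\alpha+\lambda(E)=0$. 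If $\alpha=0$, then $\lambda=0$, and the strict inequality $\gamma<0$ contradicts $\calK\neq\emptyset$ (take $s=0$). So $\alpha>0$, and we may normalize $\mu=-\lambda/\alpha\in\calP(E)$.

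The separation inequality now reads $u(x)-\int_E g\,d\mu\leq\gamma/\alpha<c-\int\varphi\,d\mu$. Taking the infimum over continuous $g\geq u_E$ and using the Choquet fact recalled before the definition, this becomes
\[
u(x)-\int_E u_E\,d\mu\leq\gamma/\alpha \quad\text{for all }u\in\calA.
\]
To see that $\mu$ is Jensen, fix $u\in\calA$, $v\in\mathcal F$ and $t>0$. Assumption~(iii) gives $u+tv\in\calA$ with trace at most $u_E+tv_E$, so
\[
u(x)+tv(x)-\int u_E\,d\mu-t\int v_E\,d\mu\leq\gamma/\alpha.
\]
Dividing by $t$ and letting $t\to\infty$ yields $v(x)\leq\int v_E\,d\mu$, so $\mu\in J_x^E(\mathcal F)$. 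The same bound gives $-B_{\calA}(x,\mu)\leq\gamma/\alpha<c-\int\varphi\,d\mu$. Hence $\inf_\mu(\int\varphi\,d\mu-B_{\calA})<c=P_{\calA}\varphi(x)+\varepsilon$, and letting $\varepsilon\to0$ finishes the proof.

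I expect the main obstacle to be the non-closedness of $\calK$ and the semicontinuous traces. Working with $\overline{\calK}$ handles the first: the separating inequality extends to the closure by continuity of the functional, and the boundary-stability argument above shows the point stays outside it. The second is handled by the directed-infimum identity. One further check is that the sign bookkeeping does not silently require $\int u_E\,d\mu$ to be finite. It is, because the traces are bounded.
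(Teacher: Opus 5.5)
Your proposal is correct and follows the paper's proof essentially step for step: weak duality, then strict Hahn--Banach separation of $(\varphi,c)$ from $\overline{\calK}$ (you reprove the boundary-stability estimate inline), sign determination, total-mass normalization via constants, the Jensen property via the cone $\mathcal F$, and the continuous-majorant identity. The differences are cosmetic. You use the opposite sign convention, and you derive $\alpha+\lambda(E)=0$ before excluding $\alpha=0$, which makes that exclusion immediate and avoids the paper's auxiliary competitor $u_0$ with $u_{0,E}<\varphi$.
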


\begin{proof}
We first prove weak duality.  Let $u\in\calA$ satisfy
$u_E\leq\varphi$, and let $\mu\in J_x^E(\mathcal F)$.  By the
definition of $B_{\calA}$,
\[
        B_{\calA}(x,\mu)
        \leq
        \int_E u_E\,d\mu-u(x).
\]
Therefore
\[
        u(x)
        \leq
        \int_E u_E\,d\mu-B_{\calA}(x,\mu)
        \leq
        \int_E\varphi\,d\mu-B_{\calA}(x,\mu).
\]
Taking the supremum over all admissible $u$ gives
\[
        P_{\calA}\varphi(x)
        \leq
        \int_E\varphi\,d\mu-B_{\calA}(x,\mu),
\]
and then taking the infimum over $\mu\in J_x^E(\mathcal F)$ gives one
inequality.

For the reverse inequality, fix $c>P_{\calA}\varphi(x)$.  We first show
that $(\varphi,c)$ does not belong to the closure of $\calK$.  Indeed,
if $(g_j,t_j)\in\calK$ and $(g_j,t_j)\to(\varphi,c)$ in
$C(E)\times\R$, then
\[
        t_j\leq P_{\calA}g_j(x).
\]
By Lemma~\ref{lem:boundary-stability},
\[
        P_{\calA}g_j(x)
        \leq
        P_{\calA}\varphi(x)
        +
        \|g_j-\varphi\|_{L^\infty(E)}.
\]
Letting $j\to\infty$ gives $c\leq P_{\calA}\varphi(x)$, a
contradiction.  Hence
\[
        (\varphi,c)\notin \overline{\calK}.
\]

Since $\overline{\calK}$ is closed and convex, the Hahn--Banach
separation theorem gives a nonzero continuous linear functional on
$C(E)\times\R$ and a strict separation.  Thus there exist a signed
finite Borel measure $\lambda$ on $E$ and a real number $\alpha$ such
that
\[
        \int_E\varphi\,d\lambda+\alpha c
        <
        \inf_{(g,t)\in\overline{\calK}}
        \left(
        \int_E g\,d\lambda+\alpha t
        \right).
\]
The infimum over $\overline{\calK}$ agrees with the infimum over
$\calK$ for this continuous linear functional.  Since $g\in C(E)$ may be
replaced by $g+h$ for arbitrary $h\in C(E)$, $h\geq0$, the infimum on
the right is finite only if $\lambda\geq0$.  Since $s\geq0$ is
arbitrary and $t=u(x)-s$, finiteness also forces $\alpha\leq0$.

We claim that $\alpha<0$.  If $\alpha=0$, then $\lambda$ is a nonzero
positive measure and
\[
        \int_E\varphi\,d\lambda
        <
        \inf_{\substack{u\in\calA\\ g\in C(E),\ g\geq u_E}}
        \int_E g\,d\lambda.
\]
Choose $u_0\in\calA$.  Since $u_{0,E}$ is bounded above and $\varphi$ is
continuous, subtracting a sufficiently large constant from $u_0$ gives a
new element, still denoted $u_0$, with $u_{0,E}<\varphi$ on $E$.  Since
$\lambda$ is nonzero and positive,
\[
        \int_E u_{0,E}\,d\lambda
        <
        \int_E\varphi\,d\lambda,
\]
and by the continuous-majorant formula above, the right-hand infimum is
at most $\int_E u_{0,E}\,d\lambda$.  This contradicts the previous
inequality.  Hence $\alpha<0$.

After multiplying the separating functional by a positive constant, we
may assume $\alpha=-1$.  The separation inequality becomes
\[
        \int_E\varphi\,d\lambda-c
        <
        \inf_{\substack{u\in\calA\\ g\in C(E),\ g\geq u_E}}
        \left(
        \int_E g\,d\lambda-u(x)
        \right).
\]
Since $\lambda\geq0$ and each trace $u_E$ is bounded upper
semicontinuous, the continuous-majorant formula gives
\[
        \inf_{\substack{g\in C(E)\\ g\geq u_E}}
        \int_E g\,d\lambda
        =
        \int_E u_E\,d\lambda.
\]
Thus
\[
        \int_E\varphi\,d\lambda-c
        <
        \inf_{u\in\calA}
        \left(
        \int_E u_E\,d\lambda-u(x)
        \right),
\]
and the infimum on the right is finite.

Adding constants to functions in $\calA$ identifies the total mass.  If
$a\in\R$ and $u\in\calA$, then $u+a\in\calA$ and
\[
        \int_E(u_E+a)\,d\lambda-(u(x)+a)
        =
        \int_E u_E\,d\lambda-u(x)
        +
        a(\lambda(E)-1).
\]
Since $a$ is arbitrary and the infimum is finite, we must have
$\lambda(E)=1$.

We now use the cone $\mathcal F$ to identify $\lambda$ as a Jensen
measure.  If $v\in\mathcal F$ and $t\geq0$, then $u+t v\in\calA$ and
\[
        \int_E(u+t v)_E\,d\lambda-(u(x)+t v(x))
        \leq
        \int_E u_E\,d\lambda-u(x)
        +
        t\left(
        \int_E v_E\,d\lambda-v(x)
        \right).
\]
If the coefficient of $t$ were negative, the infimum would be $-\infty$.
Therefore
\[
        v(x)\leq\int_E v_E\,d\lambda
        \qquad\text{for every }v\in\mathcal F.
\]
Together with $\lambda(E)=1$, this gives
$\lambda\in J_x^E(\mathcal F)$.  Therefore
\[
        \inf_{\mu\in J_x^E(\mathcal F)}
        \left(
        \int_E\varphi\,d\mu-B_{\calA}(x,\mu)
        \right)
        \leq
        \int_E\varphi\,d\lambda-B_{\calA}(x,\lambda)
        <
        c.
\]
Letting $c\downarrow P_{\calA}\varphi(x)$ gives the reverse inequality.
\end{proof}

\subsection*{The recession cone}

We remark that the homogeneous cone can be recovered from the inhomogeneous class.
If $\calA$ sits in an ambient class with the same trace operation, set
\[
        \operatorname{rec}\calA
        =
        \left\{
        v:\ u+t v\in\calA
        \text{ and }(u+t v)_E\leq u_E+t v_E
        \text{ for all }u\in\calA,\ t\geq0
        \right\}.
\]
This is the largest cone compatible with $\calA$ in the sense of
Assumption~\ref{ass:abstract}(iii).  Thus the Jensen class can be
generated from $\calA$ itself by taking
$\mathcal F=\operatorname{rec}\calA$.

In the Monge--Amp\`ere examples this recovers the usual homogeneous
cones.  For instance
$\operatorname{rec}\calA_f=C(\bOmega)\cap\Conv(\Omega)$ in the real
case: stability under adding convex functions gives one inclusion,
while if $v\in\operatorname{rec}\calA_f$ and $q$ is a strict quadratic
subsolution, then $q+t v$ is convex for all $t>0$, so
$v+t^{-1}q$ is convex and $v$ is convex by letting $t\to\infty$.
Similarly, the complex Bedford--Taylor class has recession cone
$\PSH(\Omega)\cap C(\bOmega)$.  This explains why the
Jensen measures are intrinsic to the inhomogeneous Monge--Amp\`ere
classes.

As in the homogeneous version of Edwards' theorem, the result
also holds for lower semicontinuous functions $\varphi$. More specifically,

\begin{corollary}[Lower semicontinuous data]\label{cor:lsc-edwards}
Let $E$ be compact Hausdorff, let $x$ be a distinguished point, and let
$\varphi:E\to(-\infty,+\infty]$ be lower semicontinuous and bounded from
below.  Assume that $(\calA,\mathcal F)$ satisfies
Assumption~\ref{ass:abstract} and that
$P_{\calA}\varphi(x)$ is finite, where
\[
        P_{\calA}\varphi(x)
        =
        \sup\{u(x):u\in\calA,\ u_E\leq \varphi\}.
\]
Then
\[
        P_{\calA}\varphi(x)
        =
        \inf_{\mu\in J_x^E(\mathcal F)}
        \left(
        \int_E\varphi\,d\mu
        -
        B_{\calA}(x,\mu)
        \right),
\]
with the integral understood in the extended sense.
\end{corollary}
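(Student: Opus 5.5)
We deduce the corollary from Theorem~\ref{thm:abstract-edwards} by approximating $\varphi$ from below by continuous functions. Weak duality is proved exactly as in the theorem. If $u\in\calA$ with $u_E\leq\varphi$ and $\mu\in J_x^E(\mathcal F)$, then $u(x)\leq\int_E u_E\,d\mu-B_{\calA}(x,\mu)\leq\int_E\varphi\,d\mu-B_{\calA}(x,\mu)$. This holds in the extended sense, since $\varphi$ is bounded below and the integrals make sense. We only need care when $B_{\calA}(x,\mu)=-\infty$; then the right-hand side is $+\infty$ and there is nothing to prove.

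For the reverse inequality, we write $\varphi$ as the pointwise supremum of the continuous functions $g\leq\varphi$. Since $E$ is compact Hausdorff (hence completely regular and normal) and $\varphi$ is lower semicontinuous and bounded below, this family is upward directed with supremum $\varphi$. Each such $g$ satisfies $P_{\calA}g(x)\leq P_{\calA}\varphi(x)<\infty$. Also $P_{\calA}g(x)>-\infty$: take $u_0\in\calA$ and subtract a large constant so that $u_{0,E}\leq g$. Hence Theorem~\ref{thm:abstract-edwards} applies to each $g$ and gives
\[
        P_{\calA}\varphi(x)\ \geq\ P_{\calA}g(x)=\inf_{\mu\in J_x^E(\mathcal F)}\Bigl(\int_E g\,d\mu-B_{\calA}(x,\mu)\Bigr).
\]
It therefore suffices to show
\[
        \sup_{g}\ \inf_{\mu}\Bigl(\int_E g\,d\mu-B_{\calA}(x,\mu)\Bigr)\ \geq\ \inf_{\mu}\Bigl(\int_E \varphi\,d\mu-B_{\calA}(x,\mu)\Bigr),
\]
which is an interchange of $\sup$ and $\inf$.

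This interchange is the main obstacle, and I would handle it by compactness. Fix $c<\inf_\mu(\int\varphi\,d\mu-B_{\calA}(x,\mu))$ and suppose, for contradiction, that for every continuous $g\leq\varphi$ there is $\mu_g\in J_x^E(\mathcal F)$ with $\int g\,d\mu_g-B_{\calA}(x,\mu_g)\leq c$. Index by the directed family of $g$ and pass to a weak$^*$ cluster point $\mu$ of the net $(\mu_g)$ in $\calP(E)$, which is compact. Three semicontinuity facts are needed.
\begin{enumerate}[label=\textup{(\alph*)}]
    \item $J_x^E(\mathcal F)$ is weak$^*$ closed. It is an intersection of the sets $\{\mu:\int v_E\,d\mu\geq v(x)\}$, and $\mu\mapsto\int v_E\,d\mu$ is upper semicontinuous (an infimum of continuous functionals, by the continuous-majorant formula). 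Since we need superlevel sets of an upper semicontinuous map, these sets are indeed closed.
    \item $\mu\mapsto -B_{\calA}(x,\mu)=\sup_u\bigl(u(x)-\int u_E\,d\mu\bigr)$ is a supremum of lower semicontinuous functions, hence lower semicontinuous.
    \item For fixed $g_0$ and all $g\geq g_0$ we have $\int g\,d\mu_g\geq\int g_0\,d\mu_g$, and $\mu\mapsto\int g_0\,d\mu$ is continuous.
\end{enumerate}
Combining these along the subnet gives $\int g_0\,d\mu-B_{\calA}(x,\mu)\leq c$ for every $g_0$. Taking the supremum over $g_0$ and using monotone convergence for directed nets (the $\varphi$-analogue of the continuous-majorant formula) yields $\int\varphi\,d\mu-B_{\calA}(x,\mu)\leq c$ with $\mu\in J_x^E(\mathcal F)$. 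This contradicts the choice of $c$.

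The delicate points are the infinite values. If $-B_{\calA}(x,\mu_g)=+\infty$ the assumed inequality fails outright, so $B_{\calA}(x,\mu_g)$ is finite along the net. The limiting argument then only uses lower semicontinuity of the sum of two terms that are each bounded below on $\calP(E)$. Indeed, $-B_{\calA}(x,\mu)\geq u_0(x)-\sup_E u_{0,E}$, and $\int g_0\,d\mu\geq\min g_0$. So no $\infty-\infty$ issues arise, and letting $c$ increase to the dual infimum completes the proof.
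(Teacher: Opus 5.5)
Your argument is correct and follows essentially the paper's route: approximate $\varphi$ from below by continuous data, apply Theorem~\ref{thm:abstract-edwards} to each approximant, and interchange $\sup$ and $\inf$ using weak$^*$ compactness of $J_x^E(\mathcal F)$ and lower semicontinuity of $\mu\mapsto\int_E g\,d\mu-B_{\calA}(x,\mu)$. The differences are minor: you get the upper bound from weak duality directly rather than via Kat\v{e}tov--Tong insertion, and you prove the minimax step with an explicit cluster-point argument where the paper cites the standard compactness lemma; along the way you verify closedness of $J_x^E(\mathcal F)$ and upper semicontinuity of $B_{\calA}(x,\cdot)$, which the paper only asserts.
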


\begin{proof}
Put $J=J_x^E(\mathcal F)$ and
\[
        \mathcal C_\varphi
        =
        \{\psi\in C(E):\psi\leq\varphi\}.
\]
This family is nonempty because $\varphi$ is bounded from below.  By
Kat\v{e}tov--Tong insertion,
\[
        P_{\calA}\varphi(x)
        =
        \sup_{\psi\in\mathcal C_\varphi}P_{\calA}\psi(x).
\]
Indeed, if $u\in\calA$ and $u_E\leq\varphi$, then some
$\psi\in C(E)$ satisfies
\[
        u_E\leq\psi\leq\varphi.
\]

For each $\psi\in\mathcal C_\varphi$, we have
$P_{\calA}\psi(x)\leq P_{\calA}\varphi(x)<+\infty$.  Hence, by
Theorem~\ref{thm:abstract-edwards},
\[
        P_{\calA}\psi(x)
        =
        \min_{\mu\in J}
        \left(
        \int_E\psi\,d\mu
        -
        B_{\calA}(x,\mu)
        \right).
\]
The minimum is attained since $J$ is compact and
$B_{\calA}(x,\cdot)$ is upper semicontinuous.

The functions
\[
        F_\psi(\mu)
        =
        \int_E\psi\,d\mu-B_{\calA}(x,\mu),
        \qquad
        \mu\in J,
\]
form a directed increasing family of lower semicontinuous functions on
$J$.  Hence the standard compactness lemma
\[
        \sup_{\psi\in\mathcal C_\varphi}\min_J F_\psi
        =
        \min_J\sup_{\psi\in\mathcal C_\varphi} F_\psi
\]
applies.  Finally, lower semicontinuous functions on compact Hausdorff
spaces are suprema of their continuous minorants, so for every
$\mu\in J$,
\[
        \sup_{\psi\in\mathcal C_\varphi}F_\psi(\mu)
        =
        \int_E\varphi\,d\mu-B_{\calA}(x,\mu).
\]
Combining these identities gives the desired formula.
\end{proof}

\begin{proposition}[Elementary consequences]\label{prop:abstract-consequences}
Let $\calA_1$ and $\calA_2$ be admissible families for the same
constraint set $E$, homogeneous class $\mathcal F$, trace operation, and
base point $x$, and suppose $\calA_2\subset\calA_1$.  Then, for every
$\mu\in J_x^E(\mathcal F)$,
\[
        B_{\calA_2}(x,\mu)\geq B_{\calA_1}(x,\mu).
\]
Consequently, whenever the deficit formula holds for both classes and
the same boundary data~$\varphi$,
\[
        P_{\calA_2}\varphi(x)\leq P_{\calA_1}\varphi(x).
\]

For a fixed admissible family $\calA$, if the deficit formula holds for
$\varphi,\psi\in C(E)$, then
\[
        |P_{\calA}\varphi(x)-P_{\calA}\psi(x)|
        \leq
        \|\varphi-\psi\|_{L^\infty(E)}.
\]
\end{proposition}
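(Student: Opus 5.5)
The plan is to read all three claims off the definitions and off the dual formula of Theorem~\ref{thm:abstract-edwards}. No new separation argument is needed.

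For the monotonicity of the deficit, fix $\mu\in J_x^E(\mathcal F)$. The quantity $B_{\calA}(x,\mu)$ is an infimum of the functional $u\mapsto\int_E u_E\,d\mu-u(x)$ over $\calA$. Since the trace operation is the same for both families, this functional is the same on $\calA_2$ and on $\calA_1$. An infimum over the smaller set $\calA_2\subset\calA_1$ is at least the infimum over $\calA_1$, so $B_{\calA_2}(x,\mu)\geq B_{\calA_1}(x,\mu)$. This holds in the extended sense as well: if $B_{\calA_1}=-\infty$ the inequality is trivial.

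For the comparison of envelopes, I would insert the deficit inequality into the dual formula. For every $\mu\in J_x^E(\mathcal F)$,
\[
\int_E\varphi\,d\mu-B_{\calA_2}(x,\mu)\leq\int_E\varphi\,d\mu-B_{\calA_1}(x,\mu).
\]
Taking the infimum over $\mu$ on both sides and using that the deficit formula holds for both classes gives $P_{\calA_2}\varphi(x)\leq P_{\calA_1}\varphi(x)$. One could also note that this is immediate from the primal side, since the supremum runs over a smaller set. The point of the statement, however, is that the dual description already encodes the comparison, so I would present the dual argument and remark on the primal one.

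For the Lipschitz estimate, set $\varepsilon=\|\varphi-\psi\|_{L^\infty(E)}$. Each $\mu\in J_x^E(\mathcal F)$ is a probability measure, so $\int_E\varphi\,d\mu\leq\int_E\psi\,d\mu+\varepsilon$. Subtracting $B_{\calA}(x,\mu)$ from both sides and taking infima over $\mu$ gives $P_{\calA}\varphi(x)\leq P_{\calA}\psi(x)+\varepsilon$, and exchanging the roles of $\varphi$ and $\psi$ gives the absolute-value bound. Alternatively, this is exactly Lemma~\ref{lem:boundary-stability}, which I could cite instead.

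The only delicate point is bookkeeping with infinite values. The deficit may equal $-\infty$ for some $\mu$, in which case those $\mu$ contribute $+\infty$ to the dual infimum. The inequalities above remain valid in $(-\infty,+\infty]$. The formula-holding hypothesis also guarantees that both envelopes are finite, so the subtraction in the final estimate is legitimate.
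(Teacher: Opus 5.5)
Your proposal is correct and follows the paper's proof essentially step for step: monotonicity of the deficit by taking the infimum over the smaller family, comparison of the two dual infima for the envelopes, and the probability-measure bound $\int_E\varphi\,d\mu\leq\int_E\psi\,d\mu+\|\varphi-\psi\|_{L^\infty(E)}$ plus symmetry for the Lipschitz estimate. One small overstatement: the deficit formula holding does not by itself force finiteness, since both sides can equal $+\infty$ (e.g.\ when $J_x^E(\mathcal F)$ is empty); finiteness is implicitly assumed in the paper's statement as well, through the hypothesis of Theorem~\ref{thm:abstract-edwards}.
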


\begin{proof}
The first assertion follows directly from the definition of the deficit:
taking the infimum over the smaller family $\calA_2$ gives a larger
value.  The monotonicity of the envelopes follows by comparing the two
dual infima; the boundary term is the same and the larger deficit is
subtracted.

For the boundary-data estimate, every probability measure $\mu$ on $E$
satisfies
\[
        \int_E\varphi\,d\mu
        \leq
        \int_E\psi\,d\mu+\|\varphi-\psi\|_{L^\infty(E)}.
\]
The deficit term is the same for $\varphi$ and $\psi$.  Taking infima
over $\mu\in J_x^E(\mathcal F)$ gives one inequality, and the other
follows by symmetry.
\end{proof}

In the real and complex applications below, increasing the
right-hand side shrinks the admissible family, so the proposition gives
the usual monotonicity in the Monge--Amp\`ere data.

\begin{remark}[The convex boundary specialization]
\label{rem:convex-specialization}
Let $\Omega\subset\R^n$ be a bounded convex domain, let
$E=\partial\Omega$, and let $\mathcal F=C(\bOmega)\cap\Conv(\Omega)$.
Then
\[
        J_x^E(\mathcal F)
        =
        J_x^\partial
        :=
        \left\{
        \mu\in\calP(\partial\Omega):
        \int_{\partial\Omega} y\,d\mu(y)=x
        \right\}.
\]
Indeed, affine functions force the barycenter condition, and Jensen's
inequality gives the converse.  Thus the convexity of $\Omega$ enters
only in this specialization, not in Theorem~\ref{thm:abstract-edwards}
itself.

When $\calA$ is the full class of continuous convex functions on
$\bOmega$, affine functions show $B_{\calA}(x,\mu)=0$ for every
$\mu\in J_x^\partial$.  Theorem~\ref{thm:abstract-edwards} then reduces
to the usual convex Jensen representation of the convex envelope with
boundary data.
\end{remark}

\begin{remark}[Boundary support versus interior obstacles]
The restriction to boundary-supported measures is not essential for the
duality argument.  Theorem~\ref{thm:abstract-edwards} applies to any
compact constraint set $E$ once the corresponding Jensen class
$J_x^E(\mathcal F)$ is used.  For instance, in the convex setting one
may take $E=\bOmega$ and an obstacle $\Phi\in C(\bOmega)$, obtaining
\[
        P_{\calA}^{\bOmega}\Phi(x)
        =
        \inf_{\mu\in J_x(\bOmega)}
        \left(
        \int_{\bOmega}\Phi\,d\mu
        -
        B_{\calA}^{\bOmega}(x,\mu)
        \right),
\]
where $J_x(\bOmega)$ is the set of probability measures on $\bOmega$
with barycenter $x$ and $B_{\calA}^{\bOmega}$ is defined by integrating
over $\bOmega$.

Thus the boundary version should be viewed as the Dirichlet
specialization.  If one starts from an obstacle on $\bOmega$ and chooses
an arbitrary finite extension of boundary data, the resulting envelope
is an obstacle problem, not purely a Dirichlet problem.  This is visible
on the dual side: the full Jensen class contains interior measures, in
particular $\delta_x$, and for this measure the deficit is zero.  Hence
the dual formula sees the interior value $\Phi(x)$.  To recover the
boundary problem from the full-domain formula one may use the extended
obstacle $\Phi=+\infty$ in $\Omega$, or any finite extension known to
dominate all admissible competitors with the prescribed boundary data.
\end{remark}

\section{The One-Dimensional Model}

The deficit can be computed explicitly in one real dimension.  This is
the simplest indication that the inhomogeneous term should appear as a
Green-potential correction on the dual side.

\begin{proposition}[Green formula for the deficit]\label{prop:one-d}
Let $\Omega=(0,1)$, let $x\in(0,1)$, and let $\nu$ be a finite positive
Borel measure on $(0,1)$.  Consider
\[
        \calA_\nu
        =
        \{u\in C([0,1]): u \text{ is convex and } u''\geq\nu\}
\]
in the distributional sense.  Let
\[
        \mu_x=(1-x)\delta_0+x\delta_1.
\]
Then
\[
        B_\nu(x,\mu_x)
        =
        \int_0^1 G(x,t)\,d\nu(t),
\]
where
\[
        G(x,t)
        =
        \begin{cases}
        t(1-x), & t\leq x,\\
        x(1-t), & x\leq t.
        \end{cases}
\]
\end{proposition}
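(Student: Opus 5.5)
Our plan is to show that for every $u\in\calA_\nu$ the Jensen gap $(1-x)u(0)+xu(1)-u(x)$ equals a fixed quantity plus a nonnegative term, and that this nonnegative term can be made zero. The basic tool is the Green representation on $(0,1)$. For any $u\in C([0,1])$ that is convex, $u''$ is a positive Radon measure on $(0,1)$, and the difference $u-h$ between $u$ and the affine interpolant $h$ of its boundary values is the solution of $w''=u''$ with zero boundary values. We therefore expect
\[
        (1-x)u(0)+xu(1)-u(x)=\int_0^1 G(x,t)\,d(u'')(t),
\]
with $G$ as in the statement, since $-G(x,\cdot)$ is the Dirichlet Green function of $d^2/dt^2$.

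The first step is to justify this identity for continuous convex $u$ whose second-derivative measure may have infinite mass near the endpoints. We would first prove it for $u\in C^2([0,1])$ by integrating by parts twice against $G(x,\cdot)$, which is piecewise linear with a kink of size $-1$ at $t=x$. For general continuous convex $u$ we would apply the $C^2$ case on $[\varepsilon,1-\varepsilon]$, using the rescaled Green function and mollification, and then let $\varepsilon\to0$. Continuity of $u$ at $0$ and $1$ handles the boundary terms, and monotone convergence handles the integral, since $G\geq0$. This limiting argument is the only real technical point. One might worry that $\int G\,d(u'')$ could be infinite, but the left-hand side is finite, so monotone convergence shows it is finite. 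Alternatively, we could use the elementary formula $u(x)=h(x)-\int G(x,t)\,du''(t)$ for convex functions directly.

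Given the identity, the lower bound follows immediately. Since $u''\geq\nu$ and $G\geq0$,
\[
        \int_0^1 G\,du''\geq\int_0^1 G\,d\nu ,
\]
so the deficit is at least $\int G\,d\nu$.

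For the upper bound it is enough to find an admissible $u$ with $u''=\nu$. We take
\[
        u_0(y)=-\int_0^1 G(y,t)\,d\nu(t).
\]
This is continuous on $[0,1]$ because $\nu$ is finite and $G$ is continuous and bounded. It vanishes at $0$ and $1$, and $u_0''=\nu$ distributionally, so $u_0$ is convex and lies in $\calA_\nu$. Its gap is exactly $-u_0(x)=\int G(x,t)\,d\nu(t)$, and the two bounds together give the claimed equality.
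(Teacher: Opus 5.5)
Your proof is correct and is essentially the paper's argument. The paper sets $w=L_u-u$ (the affine interpolant minus $u$) and uses $-w''\geq\nu$ with $w(0)=w(1)=0$ to get $w(x)\geq\int_0^1 G(x,t)\,d\nu(t)$; it then attains equality with $u=L-w$, where $-w''=\nu$ and $w(0)=w(1)=0$. The one difference is that you first prove the exact identity $w(x)=\int_0^1 G(x,t)\,d(u'')(t)$, including the limiting argument when $u''$ has infinite mass near the endpoints, while the paper simply invokes the Green kernel comparison for $-d^2/ds^2$.
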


\begin{proof}
Let $L_u$ be the affine function interpolating the endpoint values of
$u$:
\[
        L_u(s)=(1-s)u(0)+s u(1).
\]
If $u''\geq\nu$, then the function
\[
        w(s)=L_u(s)-u(s)
\]
is nonnegative on $[0,1]$, vanishes at the endpoints, and satisfies
$-w''\geq\nu$ in the distributional sense.  The Green kernel for
$-d^2/ds^2$ on $(0,1)$ with zero boundary values is $G$.  Hence
\[
        L_u(x)-u(x)
        \geq
        \int_0^1 G(x,t)\,d\nu(t).
\]
Since
\[
        L_u(x)
        =
        \int_{\partial\Omega}u\,d\mu_x,
\]
we obtain
\[
        \int_{\partial\Omega}u\,d\mu_x-u(x)
        \geq
        \int_0^1 G(x,t)\,d\nu(t)
\]
for all $u\in\calA_\nu$. Equality is obtained by solving
\[
        -w''=\nu,\qquad w(0)=w(1)=0,
\]
and setting $u=L-w$ for an arbitrary affine function $L$.  Then
$u''=\nu$ and the displayed inequality is an equality.  Taking the
infimum over $u\in\calA_\nu$ proves the formula.
\end{proof}

\begin{corollary}
For $u\in\calA_\nu$,
\[
        u(x)
        \leq
        (1-x)u(0)+x u(1)
        -
        \int_0^1 G(x,t)\,d\nu(t).
\]
Thus the deficit is exactly the amount by which the usual linear
interpolation inequality is improved by the lower bound on $u''$.
\end{corollary}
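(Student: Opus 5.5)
This follows directly from Proposition~\ref{prop:one-d} and the definition of the deficit; no new estimate is needed. Fix $u\in\calA_\nu$. By definition, $B_\nu(x,\mu_x)$ is the infimum of the Jensen gap $\int_{\partial\Omega}u\,d\mu_x-u(x)$ over $\calA_\nu$. Hence for this particular $u$,
\[
        \int_{\partial\Omega}u\,d\mu_x-u(x)\geq B_\nu(x,\mu_x).
\]
Here $\partial\Omega=\{0,1\}$ and $u$ is continuous on $[0,1]$, so the trace is the restriction. Since $\mu_x=(1-x)\delta_0+x\delta_1$, the boundary integral is exactly $(1-x)u(0)+xu(1)$.

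Next substitute the value of the deficit computed in Proposition~\ref{prop:one-d}, namely $B_\nu(x,\mu_x)=\int_0^1G(x,t)\,d\nu(t)$. Rearranging the inequality gives the displayed bound
\[
        u(x)\leq(1-x)u(0)+xu(1)-\int_0^1G(x,t)\,d\nu(t).
\]
Alternatively, one can quote the pointwise inequality $L_u(x)-u(x)\geq\int_0^1G(x,t)\,d\nu(t)$ established inside that proof, which is the same statement with $L_u(x)=(1-x)u(0)+xu(1)$.

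For the interpretive sentence: the classical interpolation inequality for convex $u$ is $u(x)\leq L_u(x)$. This corresponds to the case $\nu=0$, where $G$ contributes nothing. Since $G\geq0$ and $\nu\geq0$, the correction term is nonnegative, so the bound improves the linear interpolation inequality. The word \emph{exactly} is justified by the equality case in the proof of Proposition~\ref{prop:one-d}. There, $u=L-w$ with $-w''=\nu$ and $w(0)=w(1)=0$ attains the bound, so the improvement cannot be made larger uniformly over $\calA_\nu$.

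There is no real obstacle here. The only point to check is that the trace on $\{0,1\}$ is the ordinary restriction, which holds because the functions in $\calA_\nu$ are continuous on $[0,1]$.
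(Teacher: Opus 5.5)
Your proposal is correct and matches the paper, which gives no separate proof and treats the corollary as an immediate consequence of Proposition~\ref{prop:one-d}. You read off the bound either from the definition of $B_\nu(x,\mu_x)$ together with the computed value $\int_0^1 G(x,t)\,d\nu(t)$, or directly from the inequality $L_u(x)-u(x)\geq\int_0^1 G(x,t)\,d\nu(t)$ established in that proposition's proof; both are exactly the intended argument.
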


\section{Real Alexandrov Monge--Amp\`ere Subsolutions}

We now turn from the one-dimensional model to convex Alexandrov
subsolutions.  Let $\Omega\subset\R^n$ be a bounded convex domain and
let $f\in C(\bOmega)$, $f\geq0$.  We write
\[
        \calA_f
        =
        \left\{
        u\in C(\bOmega)\cap\Conv(\Omega):
        \MA_{\R}(u)\geq f\,dx
        \right\},
\]
where $\MA_{\R}(u)$ denotes the Alexandrov Monge--Amp\`ere measure of
$u$.  The associated envelope is
\[
        P_f\varphi(x)
        =
        \sup\left\{
        u(x):
        u\in\calA_f,\
        u|_{\partial\Omega}\leq\varphi
        \right\},
\]
and the deficit is
\[
        B_f(x,\mu)
        =
        \inf_{u\in\calA_f}
        \left(
        \int_{\partial\Omega}u\,d\mu-u(x)
        \right).
\]

\begin{lemma}[Convexity and stability of $\calA_f$]\label{lem:real-structural}
The class $\calA_f$ is convex, invariant under addition of constants,
and stable under addition of nonnegative multiples of functions in
$C(\bOmega)\cap\Conv(\Omega)$.
\end{lemma}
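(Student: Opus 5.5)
The plan is to verify the three properties directly. The only nontrivial input is a concavity (Minkowski-type) inequality for the Alexandrov Monge--Amp\`ere measure. Invariance under constants is immediate: adding $a\in\R$ does not change the subgradient map $\partial u$, so $\MA_{\R}(u+a)=\MA_{\R}(u)$. Continuity and convexity are also preserved, and the trace satisfies $(u+a)|_{\partial\Omega}=u|_{\partial\Omega}+a$. Since all functions are continuous on $\bOmega$, the trace is restriction and the trace inequalities in Assumption~\ref{ass:abstract} hold with equality. So only the membership statements need proof.

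The key claim is that for convex $u,v$ on $\Omega$ and $0\le t\le 1$,
\[
        \MA_{\R}(tu+(1-t)v)^{1/n}\ \geq\ t\,\MA_{\R}(u)^{1/n}+(1-t)\,\MA_{\R}(v)^{1/n}
\]
in the sense of absolutely continuous parts with respect to $dx$. More usefully, it suffices to show: if $\MA_{\R}(u)\ge f\,dx$ and $\MA_{\R}(v)\ge g\,dx$, then
\[
        \MA_{\R}(tu+(1-t)v)\ \ge\ \bigl(tf^{1/n}+(1-t)g^{1/n}\bigr)^n\,dx.
\]
I would prove this by the subgradient inclusion $\partial(tu+(1-t)v)(E)\supset t\,\partial u(E)+(1-t)\,\partial v(E)$ for each Borel set $E$ (pointwise, $t p+(1-t)q$ is a subgradient of the combination whenever $p\in\partial u(y)$ and $q\in\partial v(y)$). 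Then apply Brunn--Minkowski on small balls $E$ and differentiate measures at Lebesgue points of the continuous densities.

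The main technical obstacle is this localization. Brunn--Minkowski is applied to images $\partial u(E)$, which are measurable but need not be compact. Moreover, the inclusion uses a common point $y$, whereas Minkowski sums combine all pairs. I would handle it the standard way. For a small closed ball $E$, choose compact subsets of $\partial u(E)$ and $\partial v(E)$ whose measures are close to the full values. Apply Brunn--Minkowski to get $|\partial w(E)|^{1/n}\ge t|\partial u(E)|^{1/n}+(1-t)|\partial v(E)|^{1/n}$ with $w=tu+(1-t)v$. Divide by $|E|$ and let the radius shrink; continuity of $f$ gives the pointwise density bound, hence the measure inequality.

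Given this, convexity of $\calA_f$ follows with $g=f$, since $(tf^{1/n}+(1-t)f^{1/n})^n=f$; the combination is again continuous and convex. For stability, take $v\in C(\bOmega)\cap\Conv(\Omega)$ and $s\ge0$. Then $\partial(u+sv)(y)\supset\partial u(y)+s\,\partial v(y)\supset\partial u(y)+s q_y$ for any fixed choice of subgradients $q_y\in\partial v(y)$. To avoid a measurable-selection issue, I would instead invoke the same Brunn--Minkowski argument with $g=0$: write $u+sv=(1+s)\bigl(\tfrac1{1+s}u+\tfrac{s}{1+s}v\bigr)$. The displayed inequality gives $\MA_{\R}$ of the bracket at least $(1+s)^{-n}f\,dx$, and scaling by $(1+s)$ multiplies $\MA_{\R}$ by $(1+s)^n$. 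Hence $\MA_{\R}(u+sv)\ge f\,dx$, which completes the proof.
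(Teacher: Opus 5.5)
There is a genuine gap in the key step. The inclusion $\partial(tu+(1-t)v)(E)\supset t\,\partial u(E)+(1-t)\,\partial v(E)$ is false. Your parenthetical only justifies the diagonal statement: $tp+(1-t)q\in\partial(tu+(1-t)v)(y)$ when $p\in\partial u(y)$ and $q\in\partial v(y)$ at the \emph{same} point $y$. The Minkowski sum, however, pairs subgradients taken at different points, and passing to compact subsets only handles measurability, not this. A concrete failure: in $\R^2$ take $u=\frac12|y|^2$ and $v=\frac12(\lambda y_1^2+\lambda^{-1}y_2^2)$ with $\lambda\neq1$, and put $M=\operatorname{diag}(\lambda,\lambda^{-1})$, $t=\frac12$, and $E$ the closed unit ball. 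Then $\partial w(E)=\frac12(I+M)E$ has support function $\frac12|(I+M)e|$. The Minkowski sum $\frac12E+\frac12ME$ has support function $\frac12(|e|+|Me|)$, which is strictly larger off the coordinate axes. The volume inequality you want does hold in this example, but only because of Minkowski's \emph{determinant} inequality $\det\frac12(I+M)\geq1$, which is information about matrices at a point rather than about sets. In fact the set-level inequality $|\partial w(E)|^{1/n}\geq t|\partial u(E)|^{1/n}+(1-t)|\partial v(E)|^{1/n}$ fails for general Borel $E$. Take $u=\Phi(y_1)+\Phi(y_2)$ and $v=\Phi(1-y_1)+\Phi(1-y_2)$, where $\Phi''$ decreases from $1$ on $(-\infty,0]$ to $\varepsilon$ on $[1,\infty)$. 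Let $E$ be the union of small balls $B$ around $(-1,-1)$ and $(2,2)$, on which the Hessians of $(u,v)$ are $(I,\varepsilon I)$ and $(\varepsilon I,I)$ respectively. With $t=\frac12$ the left side is $\frac{1+\varepsilon}{\sqrt2}|B|^{1/2}$, while the right side is $\sqrt{1+\varepsilon^2}\,|B|^{1/2}$. So no inclusion valid for every Borel $E$ can feed Brunn--Minkowski here.

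What is missing is concavity of $\det^{1/n}$ applied \emph{pointwise to Hessians}, before anything is integrated. The paper does this via mollification. On $U\Subset\Omega$ it proves $\det D^2u_\varepsilon\geq(f^{1/n}*\rho_\varepsilon)^n$ using Jensen for $\det^{1/n}$ on the absolutely continuous part of $D^2u$. It then uses concavity and monotonicity of $\det^{1/n}$ for the smooth functions $tu_\varepsilon+(1-t)v_\varepsilon$ and $u_\varepsilon+sw_\varepsilon$, and finishes with weak continuity of $\MA_{\R}$ under local uniform convergence. Alternatively, you can work directly with the decomposition $D^2u=A_u\,dx+(D^2u)_s$. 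Alexandrov's theorem identifies the absolutely continuous part of $\MA_{\R}(u)$ with $\det A_u\,dx$, so $\det A_u\geq f$ a.e. By linearity of the Hessian, $\MA_{\R}(tu+(1-t)v)\geq\det(tA_u+(1-t)A_v)\,dx\geq(tf^{1/n}+(1-t)g^{1/n})^n\,dx$. With either repair the rest of your proposal goes through. That includes your differentiation step and your neat reduction of stability to the case $g=0$ via $u+sv=(1+s)\bigl(\tfrac{1}{1+s}u+\tfrac{s}{1+s}v\bigr)$ and $\MA_{\R}(cw)=c^n\MA_{\R}(w)$.
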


\begin{proof}
Invariance under addition of constants is immediate.

We use the standard regularization and weak-continuity properties of the
Alexandrov Monge--Amp\`ere measure; see
\cite[Sections~1.1--1.2]{Gutierrez:01}.  In particular, if
$u\in\Conv(\Omega)$ and $\MA_{\R}(u)\geq f\,dx$, then on each
$U\Subset\Omega$ the mollifications $u_\varepsilon=u*\rho_\varepsilon$
satisfy
\[
        \det D^2u_\varepsilon
        \geq
        (f^{1/n}*\rho_\varepsilon)^n
        \quad\text{on }U,
\]
and $u_\varepsilon\to u$ locally uniformly, with Alexandrov measures
converging weakly.  The displayed estimate follows by writing the
absolutely continuous part of the Hessian measure as $A\,dx$ and using
Jensen's inequality for $\det^{1/n}$ on positive semidefinite matrices.

For smooth convex functions the assertions follow from the concavity and
monotonicity of $A\mapsto\det(A)^{1/n}$ on the positive semidefinite
cone.  Thus, if $u,v\in\calA_f$ and $0\leq t\leq1$, the preceding
estimate applied to $u_\varepsilon$ and $v_\varepsilon$ gives
\[
        \det D^2(tu_\varepsilon+(1-t)v_\varepsilon)
        \geq
        (f^{1/n}*\rho_\varepsilon)^n
        \quad\text{on }U.
\]
Letting $\varepsilon\to0$ and using weak continuity gives
$\MA_{\R}(tu+(1-t)v)\geq f\,dx$ on $U$, hence on $\Omega$.

The same argument proves stability under addition of nonnegative
multiples of functions in $C(\bOmega)\cap\Conv(\Omega)$: for
$w\in C(\bOmega)\cap\Conv(\Omega)$, $w_\varepsilon=w*\rho_\varepsilon$,
and $s\geq0$,
\[
        D^2(u_\varepsilon+s w_\varepsilon)\geq D^2u_\varepsilon,
\]
so determinant monotonicity gives the same lower bound, and the limit
again yields $\MA_{\R}(u+s w)\geq f\,dx$.
\end{proof}

\begin{lemma}[Strict subsolutions]\label{lem:strict-real}
If $f\in C(\bOmega)$ is bounded and nonnegative, then for every
$\varphi\in C(\partial\Omega)$ there exists $u_0\in\calA_f$ with
$u_0|_{\partial\Omega}<\varphi$.
\end{lemma}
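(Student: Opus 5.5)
The plan is to use an explicit quadratic function. Put $M=\sup_{\bOmega}f<\infty$. Since $\Omega$ is bounded, choose $R$ with $\bOmega\subset B(0,R)$ and set
\[
        q(y)=\frac{a}{2}|y|^2,\qquad a=M^{1/n}.
\]
Then $q$ is smooth and convex, and $D^2q=a\,I$, so $\det D^2q=a^n=M\geq f$ pointwise on $\Omega$. For a $C^2$ convex function the Alexandrov Monge--Amp\`ere measure is $\det D^2q\,dx$ (see \cite[Sections~1.1--1.2]{Gutierrez:01}). Hence $\MA_{\R}(q)\geq M\,dx\geq f\,dx$, and $q\in C(\bOmega)\cap\Conv(\Omega)$ gives $q\in\calA_f$. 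If $M=0$ we may instead take $q=0$, since $\calA_0$ contains the constants.

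Next I will adjust the boundary values using a constant shift, which Lemma~\ref{lem:real-structural} permits because $\calA_f$ is invariant under addition of constants. The function $q$ is continuous on the compact set $\partial\Omega$ and satisfies $q\leq aR^2/2$ there, while $\varphi\in C(\partial\Omega)$ is bounded below by $m=\min_{\partial\Omega}\varphi$. Define
\[
        u_0=q-\frac{aR^2}{2}+m-1 .
\]
Then $u_0\in\calA_f$, and on $\partial\Omega$ we have $u_0\leq m-1<\varphi$.

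There is no serious obstacle. The only point to verify is the identification of the Alexandrov measure of a smooth convex function with $\det D^2 q\,dx$, which is standard. If $f$ is only required to be bounded, and not continuous, the same bound $M$ works unchanged. If one wants a strict subsolution with $\MA_{\R}(u_0)\geq (f+1)\,dx$, for later perturbation arguments, one replaces $a$ by $(M+1)^{1/n}$ and the rest of the argument is identical.
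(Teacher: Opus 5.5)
Your proposal is correct and follows essentially the paper's argument: a quadratic $c|y|^2$ whose Hessian determinant dominates $\sup f$, so that its Alexandrov measure $\det D^2q\,dx$ is at least $f\,dx$, then shifted down by a constant so it lies strictly below $\varphi$ on $\partial\Omega$. The separate treatment of $M=0$ is unnecessary, since $a=0$ already gives $q=0$.
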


\begin{proof}
Choose $M>0$ so large that $(2M)^n\geq \|f\|_{L^\infty(\Omega)}$ and set
\[
        q_M(y)=M|y|^2.
\]
Since $q_M$ is smooth and convex, its Alexandrov measure is
$\det D^2q_M\,dx=(2M)^n\,dx\geq f\,dx$.  For $C>0$ large enough,
$u_0=q_M-C$ satisfies $u_0<\varphi$ on $\partial\Omega$.
\end{proof}

\begin{remark}
The abstract theorem does not require the corresponding epigraph to be
closed.  This is important for the applications: the dual formula below
is obtained from convexity and stability under adding homogeneous
convex functions, rather than from the pre-existing Alexandrov
Dirichlet existence theorem.
\end{remark}

\begin{theorem}[Real inhomogeneous Jensen formula]\label{thm:real}
Let $\Omega\subset\R^n$ be a bounded convex domain, $x\in\Omega$, and
$f\in C(\bOmega)$, $f\geq0$.  Then for every
$\varphi\in C(\partial\Omega)$,
\[
        P_f\varphi(x)
        =
        \inf_{\mu\in J_x^\partial}
        \left(
        \int_{\partial\Omega}\varphi\,d\mu
        -
        B_f(x,\mu)
        \right).
\]
\end{theorem}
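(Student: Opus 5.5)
The plan is to deduce the statement directly from Theorem~\ref{thm:abstract-edwards}, applied with $E=\partial\Omega$, $\calA=\calA_f$, $\mathcal F=C(\bOmega)\cap\Conv(\Omega)$, and the trace $u_E=u|_{\partial\Omega}$. Every element of $\calA_f$ is continuous on $\bOmega$, so its trace is simply its restriction to $\partial\Omega$. This restriction is continuous, bounded, and therefore bounded upper semicontinuous on the compact set $\partial\Omega$, and the trace inequalities in Assumption~\ref{ass:abstract} hold as equalities.

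The structural hypotheses then come from Lemma~\ref{lem:real-structural}. It gives convexity of $\calA_f$, which is (i). It gives invariance under addition of constants, which is (ii). It gives stability under $u\mapsto u+tv$ for $v\in C(\bOmega)\cap\Conv(\Omega)$ and $t\ge0$, which is (iii).

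By Remark~\ref{rem:convex-specialization}, the Jensen class $J_x^{\partial\Omega}(\mathcal F)$ coincides with $J_x^\partial$, the boundary probability measures with barycenter $x$. This is where convexity of $\Omega$ is used. Affine functions belong to $\mathcal F$, and so do their negatives, which forces the barycenter condition. Conversely, Jensen's inequality for a continuous convex function on the compact convex set $\bOmega$ gives $v(x)\le\int v\,d\mu$ whenever $\mu$ has barycenter $x$.

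The remaining hypothesis is finiteness of $P_f\varphi(x)$. For $>-\infty$, Lemma~\ref{lem:strict-real} provides $u_0\in\calA_f$ with $u_0|_{\partial\Omega}<\varphi$, so the admissible set is nonempty. Here $f$ is bounded because $f\in C(\bOmega)$ and $\bOmega$ is compact. For $<+\infty$, take any admissible $u$. Since $u$ is convex and continuous on $\bOmega$, and $x$ is the barycenter of some boundary probability measure $\mu$, we get $u(x)\le\int u\,d\mu\le\max_{\partial\Omega}\varphi$. Such a measure exists: take a segment through $x$ whose endpoints lie on $\partial\Omega$ and use the corresponding two-point measure. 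Alternatively, the bound follows from weak duality with any $\mu\in J_x^\partial$ together with $B_f\ge0$; the latter holds because $\calA_f\subset\mathcal F$ and $\mu$ is a Jensen measure.

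With all hypotheses verified, Theorem~\ref{thm:abstract-edwards} yields the formula. The only step with real content is the structural Lemma~\ref{lem:real-structural}, which has already been established. The mild obstacle is checking carefully that the barycenter identification in Remark~\ref{rem:convex-specialization} uses functions continuous on all of $\bOmega$, so that Jensen's inequality applies up to the boundary. Since $\mathcal F$ consists of such functions, this causes no difficulty.
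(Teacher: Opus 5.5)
Your proposal is correct and follows the paper's proof essentially verbatim. Like the paper, you apply Theorem~\ref{thm:abstract-edwards} with $E=\partial\Omega$ and $\mathcal F=C(\bOmega)\cap\Conv(\Omega)$, take the structural hypotheses from Lemma~\ref{lem:real-structural}, get finiteness from Lemma~\ref{lem:strict-real} together with the upper bound of convex functions by their boundary values, and identify the Jensen class via Remark~\ref{rem:convex-specialization}; your two-point measure along a chord through $x$ simply spells out the upper bound that the paper states in one line.
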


\begin{proof}
Apply Theorem~\ref{thm:abstract-edwards} with
$E=\partial\Omega$ and
$\mathcal F=C(\bOmega)\cap\Conv(\Omega)$.  By
Lemma~\ref{lem:real-structural}, the pair
$(\calA_f,\mathcal F)$ satisfies the convexity and stability
hypotheses.  Lemma~\ref{lem:strict-real} gives an admissible competitor
below the prescribed boundary data, so the envelope is not $-\infty$.
It is not $+\infty$ because convex functions with a uniform boundary
upper bound are bounded above at each interior point.  Thus the
finiteness hypothesis in Theorem~\ref{thm:abstract-edwards} is
satisfied.  Finally, Remark~\ref{rem:convex-specialization} identifies
$J_x^E(\mathcal F)$ with $J_x^\partial$.
\end{proof}

\subsection*{A stress representation of the real deficit}

There is a useful way to rewrite the real deficit which separates the
ordinary Jensen gap from the Monge--Amp\`ere constraint.  Fix
$x\in\Omega$ and $\mu\in J_x^\partial$.  A \emph{Jensen stress} for
$(x,\mu)$ is a positive semidefinite symmetric matrix-valued measure
$\Gamma$ on $\Omega$ such that
\begin{equation}\label{eq:jensen-stress}
        \int_{\partial\Omega}\phi\,d\mu-\phi(x)
        =
        \int_{\Omega}D^2\phi:d\Gamma
\end{equation}
for every $\phi\in C^2(\bOmega)$.  Equivalently,
$\operatorname{div}\operatorname{div}\Gamma=\mu-\delta_x$ in the weak
sense, with the boundary term encoded by the right-hand side of
\eqref{eq:jensen-stress}.  Here $A:B=\operatorname{tr}(AB)$ denotes the
usual contraction of symmetric matrices, so if
$\Gamma=(\Gamma_{ij})$ then
\[
        \int_{\Omega}D^2\phi:d\Gamma
        =
        \sum_{i,j}
        \int_{\Omega}
        \frac{\partial^2\phi}{\partial x_i\partial x_j}
        \,d\Gamma_{ij}.
\]

\begin{proposition}[Real Jensen stresses and lower bound]
\label{prop:real-stress-lower-bound}
Let $\Omega\subset\R^n$ be bounded and convex, let $x\in\Omega$, and
let $\mu\in J_x^\partial$.  Then Jensen stresses for $(x,\mu)$ exist.
Moreover, if $\Gamma=G\,dx+\Gamma_s$ is any Jensen stress and
$u\in C^2(\bOmega)$ is convex with $\det D^2u\geq f$, then
\begin{equation}\label{eq:stress-lower-bound}
        \int_{\partial\Omega}u\,d\mu-u(x)
        \geq
        n\int_{\Omega}f^{1/n}\det(G)^{1/n}\,dx .
\end{equation}
Consequently the same lower bound holds for the smooth deficit obtained
by taking the infimum over such smooth competitors.
\end{proposition}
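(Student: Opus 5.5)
For existence I would construct an explicit Jensen stress from a disintegration of $\mu$ along segments from $x$.  For each $y\in\partial\Omega$, Taylor's formula with integral remainder along the segment $[x,y]$ gives
\[
        \phi(y)-\phi(x)-\nabla\phi(x)\cdot(y-x)
        =
        \int_0^1(1-s)\,D^2\phi(x+s(y-x)):(y-x)\otimes(y-x)\,ds .
\]
I then integrate this identity against $d\mu(y)$.  The barycenter condition $\int y\,d\mu=x$ kills the gradient term, so
\[
        \int\phi\,d\mu-\phi(x)
        =
        \int_{\partial\Omega}\int_0^1 (1-s)\,D^2\phi(x+s(y-x)):(y-x)\otimes(y-x)\,ds\,d\mu(y).
\]
I define $\Gamma$ as the push-forward of the positive semidefinite matrix measure $(1-s)(y-x)\otimes(y-x)\,ds\,d\mu(y)$ under the map $(s,y)\mapsto x+s(y-x)$.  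This is a finite positive semidefinite matrix-valued measure, and by convexity it lives on the segments $[x,y)$, which lie in $\Omega$ except possibly at endpoints.  The factor $(1-s)$ vanishes at $s=1$, so no mass sits on the boundary.  Since the segments lie in $\bOmega$, the measure is well defined for $\phi\in C^2(\bOmega)$, and hence it satisfies \eqref{eq:jensen-stress}.

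For the lower bound, I apply \eqref{eq:jensen-stress} with $\phi=u$, which gives
\[
        \int u\,d\mu-u(x)=\int_\Omega D^2u:d\Gamma .
\]
I split this as $\int D^2u:G\,dx+\int D^2u:d\Gamma_s$.  The singular part is nonnegative because $D^2u\ge0$ and $\Gamma_s$ is positive semidefinite, so the trace of a product of positive semidefinite matrices is nonnegative.  For the absolutely continuous part I would use the pointwise matrix inequality
\[
        A:B\ge n(\det A)^{1/n}(\det B)^{1/n}
\]
for positive semidefinite $A$ and $B$.  This follows from AM--GM applied to the eigenvalues of $B^{1/2}AB^{1/2}$, whose determinant is $\det A\det B$, with the degenerate cases trivial.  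Taking $A=D^2u$ and $B=G$, and using $\det D^2u\ge f$ together with monotonicity of $t\mapsto t^{1/n}$, yields the integrand bound $n f^{1/n}\det(G)^{1/n}$ almost everywhere.  Integrating gives \eqref{eq:stress-lower-bound}.  Since the right-hand side does not depend on $u$, taking the infimum over smooth competitors gives the final claim.

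I expect the main obstacle to be the measure-theoretic bookkeeping in the existence step.  Measurability of the push-forward and finiteness of its mass are not an issue, since $|y-x|\le\operatorname{diam}\Omega$.  The points that need care are as follows.  First, I must check that the identity holds with $\Gamma$ supported in $\Omega$ rather than $\bOmega$: the $s=1$ endpoint has measure zero for $ds$, so this is fine.  Second, $\det G$ must be understood via the Radon--Nikodym density of the absolutely continuous part.  Beyond these checks, the argument is direct.
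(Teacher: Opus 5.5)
Your proposal is correct and follows essentially the same route as the paper. The stress is the push-forward of $(1-s)(y-x)\otimes(y-x)\,ds\,d\mu(y)$ along the segments from $x$, obtained from Taylor's formula with the gradient term killed by the barycenter condition. The lower bound comes from discarding the positive singular part and applying the matrix AM--GM inequality to the eigenvalues of $G^{1/2}D^2u\,G^{1/2}$. The only cosmetic difference is that you handle the semidefinite case directly, since AM--GM holds for nonnegative eigenvalues, whereas the paper passes to it by approximation from the positive definite case.
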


\begin{proof}
Taylor's formula on the segment from $x$ to $y$ gives
\[
        \phi(y)-\phi(x)-D\phi(x)\cdot(y-x)
        =
        \int_0^1
        (1-t)(y-x)^T
        D^2\phi(x+t(y-x))
        (y-x)\,dt .
\]
After integration against $\mu$, the first order term vanishes because
$\mu$ has barycenter $x$.  Thus one obtains a stress by setting, for
continuous symmetric matrix fields $M$,
\[
        \int_{\Omega} M:d\Gamma_{x,\mu}
        =
        \int_{\partial\Omega}\int_0^1
        (1-t)(y-x)^T
        M(x+t(y-x))
        (y-x)\,dt\,d\mu(y).
\]
This is a higher-dimensional replacement for the Green kernel appearing
in the one-dimensional model above.

For the lower bound, let $u\in C^2(\bOmega)$ be convex with
$\det D^2u\geq f$.  By the stress identity,
\[
        \int_{\partial\Omega}u\,d\mu-u(x)
        =
        \int_{\Omega}D^2u:d\Gamma.
\]
If $\Gamma=G\,dx+\Gamma_s$ is the Lebesgue decomposition of the stress,
then positivity of $D^2u$ and $\Gamma_s$ gives
\[
        \int_{\partial\Omega}u\,d\mu-u(x)
        \geq
        \int_{\Omega}\operatorname{tr}(D^2u\,G)\,dx.
\]
and from the pointwise matrix arithmetic-geometric mean inequality it
follows that
\[
        \operatorname{tr}(D^2u\,G)
        \geq
        n\,\det(D^2u)^{1/n}\det(G)^{1/n}
        \geq
        n\,f^{1/n}\det(G)^{1/n}.
\]
For positive definite matrices this is the scalar AM--GM inequality
applied to the eigenvalues of
$G^{1/2}D^2u\,G^{1/2}$, and the semidefinite case follows by
approximation; see \cite[Chapter~IV]{Bhatia:97}.
Integrating gives \eqref{eq:stress-lower-bound}.
\end{proof}

The lower bound extends to Alexandrov competitors whenever the stress
pairing with $D^2u$ is justified by a smooth approximation preserving
the boundary pairing and the Monge--Amp\`ere lower bound.  In this
qualified smooth or approximable setting, every absolutely continuous
part $G\,dx$ of every Jensen stress gives a lower bound for $B_f(x,\mu)$.

This suggests the intrinsic relaxed deficit
\begin{equation}\label{eq:relaxed-deficit}
        \mathcal D_f(x,\mu)
        =
        \sup_{\Gamma}
        n\int_{\Omega}f^{1/n}\det(G_\Gamma)^{1/n}\,dx,
\end{equation}
where the supremum is over all Jensen stresses and
$G_\Gamma\,dx$ denotes the absolutely continuous part of $\Gamma$.
The preceding argument gives, at least in the smooth or approximable
setting,
\[
        \mathcal D_f(x,\mu)\leq B_f(x,\mu).
\]
Thus $B_f$ dominates a quantity depending only on the right-hand side
$f$ and on the second-order transport data carrying $\delta_x$ to
$\mu$.  Whether $\mathcal D_f(x,\mu)=B_f(x,\mu)$ in natural nonsmooth
classes is left open; such an identity would give a precise nonlinear
adjoint formula for the deficit.

\begin{remark}[Formal equality case]
The equality case in \eqref{eq:stress-lower-bound} suggests a more
intrinsic Euler--Lagrange picture.  If a smooth minimizer $u$ and a
maximizing stress $\Gamma=G\,dx$ exist and no relaxation gap occurs,
then equality in the matrix arithmetic-geometric mean would force
\[
        G=a\,\operatorname{cof}D^2u
\]
for some nonnegative scalar density $a$.  The stress equation becomes
\[
        \int_{\partial\Omega}\phi\,d\mu-\phi(x)
        =
        \int_{\Omega}
        a\,\operatorname{cof}D^2u:D^2\phi\,dx,
        \qquad \phi\in C^2(\bOmega),
\]
or, formally,
\[
        \operatorname{div}\operatorname{div}
        \bigl(a\,\operatorname{cof}D^2u\bigr)
        =
        \mu-\delta_x.
\]
Together with $\det D^2u=f$, this gives
\[
        B_f(x,\mu)
        =
        n\int_{\Omega}a\,f\,dx.
\]
Thus the deficit may be viewed as the potential of $f$ against the
nonnegative adjoint Green density for the linearized Monge--Amp\`ere
operator at the extremal convex function.  In dimension one the cofactor
is identically one and this reduces exactly to the usual Green function
formula.  The rigorous smooth version of this picture is realized in
Section~\ref{sec:linearized-harmonic-measure}, where the Green kernel of
the linearized operator supplies the density $a$.
\end{remark}

\section[Plurisubharmonic Subsolutions]{Plurisubharmonic Monge--Amp\`ere Subsolutions:\\ Continuous and Bounded Cases}

We now describe the parallel complex statement.  Here the ordinary Jensen
measures are no longer barycentric measures but instead the usual
boundary-supported plurisubharmonic Jensen measures. Let $\Omega\subset\C^n$ be
a bounded domain and $x\in\Omega$.  Define
\[
        J_{x,\PSH}^{\partial}
        =
        \left\{
        \mu\in\calP(\partial\Omega):
        v(x)\leq\int_{\partial\Omega}v\,d\mu
        \text{ for all }
        v\in \PSH(\Omega)\cap C(\bOmega)
        \right\}.
\]
For $f\in C(\bOmega)$, $f\geq0$, set
\[
        \calP_f
        =
        \left\{
        u\in \PSH(\Omega)\cap C(\bOmega):
        (\ddc u)^n\geq f\,dV
        \right\}.
\]
The envelope and deficit are
\[
        P_f^{\PSH}\varphi(x)
        =
        \sup\left\{
        u(x):
        u\in\calP_f,\
        u|_{\partial\Omega}\leq\varphi
        \right\},
\]
and
\[
        B_f^{\PSH}(x,\mu)
        =
        \inf_{u\in\calP_f}
        \left(
        \int_{\partial\Omega}u\,d\mu-u(x)
        \right).
\]

\begin{lemma}[Complex structural properties]\label{lem:complex-structural}
The class $\calP_f$ is convex, invariant under addition of constants,
and stable under addition of nonnegative multiples of functions in
$\PSH(\Omega)\cap C(\bOmega)$.
\end{lemma}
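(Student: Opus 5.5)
Invariance under addition of constants is immediate, since $\ddc(u+a)=\ddc u$ and adding a constant preserves plurisubharmonicity and continuity up to $\bOmega$. The two substantive assertions, convexity and stability under adding $t v$ with $v\in\PSH(\Omega)\cap C(\bOmega)$ and $t\geq0$, both reduce to one superadditivity statement for $(\ddc\,\cdot\,)^n$ on bounded plurisubharmonic functions. The model is the real case (Lemma~\ref{lem:real-structural}), with Dinew's mixed Monge--Amp\`ere inequality \cite[Theorem~1.3]{Dinew:09} replacing concavity of $\det^{1/n}$ for smooth Hessians.

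The key tool is Dinew's theorem. If $u_1,\dots,u_n\in\PSH\cap L^\infty_{\mathrm{loc}}$ and $(\ddc u_j)^n\geq f_j\,dV$ with $f_j\geq0$ in $L^1_{\mathrm{loc}}$, then
\[
        \ddc u_1\wedge\cdots\wedge\ddc u_n\geq (f_1\cdots f_n)^{1/n}\,dV .
\]
Let $u,v\in\calP_f$ and $0\leq t\leq1$, and put $w=tu+(1-t)v$, which is bounded, plurisubharmonic and continuous on $\bOmega$. Multilinearity of the Bedford--Taylor product for locally bounded functions gives
\[
        (\ddc w)^n=\sum_{k=0}^n\binom nk t^k(1-t)^{n-k}(\ddc u)^k\wedge(\ddc v)^{n-k}.
\]
Applying Dinew's inequality to each mixed term with $f_1=\dots=f_n=f$ bounds every term below by $f\,dV$. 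The binomial coefficients then sum to $(t+1-t)^n=1$, so $(\ddc w)^n\geq f\,dV$. Hence $\calP_f$ is convex.

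For stability, take $u\in\calP_f$, $v\in\PSH(\Omega)\cap C(\bOmega)$ and $s\geq0$, and expand
\[
        (\ddc(u+sv))^n=\sum_{k}\binom nk s^{n-k}(\ddc u)^k\wedge(\ddc v)^{n-k}.
\]
Every mixed term is a positive measure, since Bedford--Taylor products of bounded plurisubharmonic functions are positive. Discarding all terms except $k=n$ gives $(\ddc(u+sv))^n\geq(\ddc u)^n\geq f\,dV$. This step needs no Dinew inequality, only positivity. Continuity up to $\bOmega$ and plurisubharmonicity of $u+sv$ are clear.

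The main obstacle is the justification of the mixed inequality in the nonsmooth Bedford--Taylor setting. Naive local regularization does not work here: convolutions in $\C^n$ do not preserve the pointwise bound $(\ddc u_\varepsilon)^n\geq f_\varepsilon\,dV$ as neatly as in the real Alexandrov case. So I would cite Dinew's theorem directly rather than reprove it, after checking that its hypotheses hold: $u,v$ are locally bounded, $f$ is continuous and hence in $L^1_{\mathrm{loc}}$, and the lower bound holds for the whole measure, not just its absolutely continuous part. If a self-contained route is preferred, the fallback is to note that for smooth strictly plurisubharmonic functions the claim is concavity of $\det^{1/n}$ on positive Hermitian matrices. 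One would then pass to limits by decreasing smooth approximations using Bedford--Taylor monotone continuity. That route, however, requires approximants with controlled Monge--Amp\`ere lower bounds, which is exactly the content of Dinew's result.
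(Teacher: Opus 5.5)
Your proposal is correct and matches the paper's proof. Constants are trivial. Convexity comes from the binomial expansion of $(\ddc(tu+(1-t)v))^n$, with Dinew's mixed Monge--Amp\`ere inequality bounding each term below by its coefficient times $f\,dV$. Stability under adding $sv$ uses only positivity of the mixed Bedford--Taylor products.

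One minor aside is inaccurate: convolution does preserve the lower bound, since the Guedj--Lu--Zeriahi theorem used later in the paper gives $(\ddc u_r)^n\geq (f^{1/n}*\chi_r)^n\,dV$. This plays no role in your argument.
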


\begin{proof}
Invariance under addition of constants is immediate.  We use
Bedford--Taylor mixed products for continuous plurisubharmonic functions
\cite{BedfordTaylor:82} and Dinew's mixed Monge--Amp\`ere inequality
\cite[Theorem~1.3]{Dinew:09}: if bounded plurisubharmonic functions
$u_1,\ldots,u_n$ satisfy $(\ddc u_j)^n\geq f_j\,dV$, then
\[
        \ddc u_1\wedge\cdots\wedge\ddc u_n
        \geq
        (f_1\cdots f_n)^{1/n}\,dV .
\]
If $u,v\in\calP_f$ and $0\leq t\leq1$, multilinearity gives
\[
        \bigl(\ddc(tu+(1-t)v)\bigr)^n
        =
        \sum_{k=0}^n
        \binom{n}{k}
        t^k(1-t)^{n-k}
        (\ddc u)^k\wedge(\ddc v)^{n-k}.
\]
By the mixed inequality, every term in the sum is bounded below by its
coefficient times $f\,dV$, and the coefficients sum to one.  Hence
$tu+(1-t)v\in\calP_f$.

If $u\in\calP_f$, $v\in\PSH(\Omega)\cap C(\bOmega)$, and $s\geq0$, then
\[
        (\ddc(u+s v))^n
        =
        (\ddc u)^n
        +
        \sum_{k=1}^n
        \binom{n}{k}
        s^k
        (\ddc u)^{n-k}\wedge(\ddc v)^k
        .
\]
The mixed terms in the sum are positive Bedford--Taylor measures, while
$(\ddc u)^n\geq f\,dV$.  Thus $u+s v\in\calP_f$.
\end{proof}

\begin{lemma}[Complex strict subsolutions]\label{lem:strict-complex}
If $\Omega$ is bounded and $f\in C(\bOmega)$ is nonnegative, then for
every $\varphi\in C(\partial\Omega)$ there exists $u_0\in\calP_f$ with
$u_0|_{\partial\Omega}<\varphi$.
\end{lemma}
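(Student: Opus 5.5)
We mirror the proof of Lemma~\ref{lem:strict-real} and use the quadratic $q_M(z)=M|z|^2$ on $\C^n$. It is smooth and plurisubharmonic, and $\ddc q_M$ is a constant positive multiple of the standard Kähler form. Hence
\[
        (\ddc q_M)^n = c_n M^n\,dV
\]
for a dimensional constant $c_n>0$ that depends only on the normalization of $\ddc$ and $dV$. For smooth functions the Bedford--Taylor measure is the classical one, so no pluripotential theory is needed at this step.

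Since $\bOmega$ is compact and $f\in C(\bOmega)$, $f$ is bounded. We choose $M>0$ with $c_nM^n\geq\|f\|_{L^\infty(\Omega)}$, which gives $(\ddc q_M)^n\geq f\,dV$ on $\Omega$. The function $q_M$ is continuous on $\bOmega$, so $q_M\in\calP_f$. By Lemma~\ref{lem:complex-structural}, $\calP_f$ is invariant under adding constants, so $q_M-C\in\calP_f$ for every $C\in\R$.

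Finally, $\varphi$ is continuous on the compact set $\partial\Omega$, so it is bounded below, and $q_M$ is bounded above on $\partial\Omega$ because $\Omega$ is bounded. Taking
\[
        C>\max_{\partial\Omega}q_M-\min_{\partial\Omega}\varphi
\]
makes $u_0=q_M-C$ satisfy $u_0|_{\partial\Omega}<\varphi$.

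There is no genuine obstacle here. The only point needing care is the normalization constant $c_n$, and it is harmless because $M$ may be taken arbitrarily large.
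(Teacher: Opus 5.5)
Your proposal is correct and matches the paper's proof: both use the quadratic $A|z|^2$ with $A^n c_n\geq\|f\|_{L^\infty(\Omega)}$ and then subtract a large constant. The paper's extra shift $A(|z|^2-R^2)$ plays no role here; it is only reused later, in the approximation theorem, where $q\leq 0$ on $\bOmega$ is needed.
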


\begin{proof}
This is a standard elementary quadratic strict subsolution; compare
the subsolution construction in \cite[Section~4]{BedfordTaylor:76}.  We
include the short argument for convenience of the reader.
Choose $R>0$ with $\Omega\Subset B(0,R)$.  Let $c_n>0$ be defined by
$(\ddc |z|^2)^n=c_n\,dV$, and choose $A>0$ so large that
$A^n c_n\geq\|f\|_{L^\infty(\Omega)}$.  Then
$q(z)=A(|z|^2-R^2)$ belongs to $\PSH(\Omega)\cap C(\bOmega)$ and
$(\ddc q)^n\geq f\,dV$.  Subtracting a sufficiently large constant from
$q$ gives the desired strict boundary inequality.
\end{proof}

\begin{theorem}[Complex inhomogeneous Jensen formula]\label{thm:psh}
Let $\Omega\subset\C^n$ be a bounded domain, $x\in\Omega$, and
$f\in C(\bOmega)$, $f\geq0$.  Then for every
$\varphi\in C(\partial\Omega)$,
\[
        P_f^{\PSH}\varphi(x)
        =
        \inf_{\mu\in J_{x,\PSH}^{\partial}}
        \left(
        \int_{\partial\Omega}\varphi\,d\mu
        -
        B_f^{\PSH}(x,\mu)
        \right).
\]
\end{theorem}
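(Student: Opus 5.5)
The plan is to apply Theorem~\ref{thm:abstract-edwards} with $E=\partial\Omega$, the trace $u_E=u|_{\partial\Omega}$ (an honest restriction, since all competitors are continuous on $\bOmega$), $\calA=\calP_f$ and $\mathcal F=\PSH(\Omega)\cap C(\bOmega)$. With these choices $J_x^E(\mathcal F)$ is, by definition, exactly $J_{x,\PSH}^\partial$. The abstract deficit $B_{\calA}(x,\mu)$ is literally $B_f^{\PSH}(x,\mu)$, and $P_{\calA}\varphi(x)=P_f^{\PSH}\varphi(x)$. So the proof reduces to checking Assumption~\ref{ass:abstract} and the finiteness of the envelope, in the same way as the proof of Theorem~\ref{thm:real}.

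\textbf{Structural hypotheses.} Since traces are restrictions of continuous functions, all trace inequalities in Assumption~\ref{ass:abstract} are equalities. Convexity of $\calP_f$, invariance under constants, and stability under $u\mapsto u+tv$ for $v\in\mathcal F$, $t\geq0$, are exactly Lemma~\ref{lem:complex-structural}. The only nontrivial input there is Dinew's mixed inequality, which that lemma already invokes.

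\textbf{Finiteness.} For the lower bound, Lemma~\ref{lem:strict-complex} gives $u_0\in\calP_f$ with $u_0<\varphi$ on $\partial\Omega$, so $P_f^{\PSH}\varphi(x)\geq u_0(x)>-\infty$. For the upper bound, take any competitor $u\in\calP_f$ with $u|_{\partial\Omega}\leq\varphi$. Then $u$ is continuous on $\bOmega$ and plurisubharmonic, hence subharmonic, in $\Omega$. By the maximum principle on the bounded domain $\Omega$, $u\leq\max_{\partial\Omega}\varphi$ throughout $\Omega$. Thus the envelope is bounded above by $\max\varphi<\infty$. Alternatively, one can note that any $\mu\in J_{x,\PSH}^\partial$ gives $u(x)\leq\int\varphi\,d\mu$; the set $J_{x,\PSH}^\partial$ is nonempty, for instance because it contains harmonic measure, but the maximum principle is simpler.

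\textbf{Main obstacle.} The only point needing care is that the trace in the abstract theorem is the plain restriction. This is why the theorem is stated for $\PSH(\Omega)\cap C(\bOmega)$ rather than for bounded classes, where upper semicontinuous regularization would enter. With continuous competitors there is no gap, and applying Theorem~\ref{thm:abstract-edwards} yields the formula directly. No regularity of $\partial\Omega$ is needed, because that theorem requires no closedness of $\calK$.
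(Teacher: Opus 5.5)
Your proposal is correct and matches the paper's proof essentially verbatim. It applies Theorem~\ref{thm:abstract-edwards} with $E=\partial\Omega$, $\calA=\calP_f$ and $\mathcal F=\PSH(\Omega)\cap C(\bOmega)$, takes the structural hypotheses from Lemma~\ref{lem:complex-structural}, and gets finiteness from Lemma~\ref{lem:strict-complex} together with the boundary maximum principle. Your remarks that the trace inequalities are equalities for continuous competitors, and that finiteness also follows via harmonic measure, are correct but inessential.
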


\begin{proof}
Lemma~\ref{lem:complex-structural} gives the convexity and stability
assumptions in Theorem~\ref{thm:abstract-edwards}, with
$E=\partial\Omega$ and
$\mathcal F=\PSH(\Omega)\cap C(\bOmega)$.  Lemma~\ref{lem:strict-complex}
gives an admissible competitor below the prescribed boundary data, so
the envelope is not $-\infty$.  If $u|_{\partial\Omega}\leq\varphi$,
then the boundary maximum principle gives
$u\leq\max_{\partial\Omega}\varphi$ on $\bOmega$, so the envelope is not
$+\infty$.  The Jensen class
$J_x^E(\mathcal F)$ is exactly $J_{x,\PSH}^{\partial}$, and the
deficit is $B_f^{\PSH}$.  The abstract theorem gives the formula.
\end{proof}

\begin{remark}
The structural input in Lemma~\ref{lem:complex-structural} is the only
place where the continuous complex theorem uses Bedford--Taylor mixed
products.  No compactness or closedness of the corresponding epigraph is
needed for the separation argument itself.
\end{remark}

\subsection*{A current representation of the complex deficit}

We now pass from the real stress formulation to its current-theoretic
counterpart.  The proposition below proves the smooth estimate; the
existence of optimal fillings and the adjoint equation discussed
afterward remain formal unless additional regularity and compactness
hypotheses are imposed.

Fix $x\in\Omega$ and $\mu\in J_{x,\PSH}^{\partial}$.  A natural replacement
for a positive matrix-valued stress is a positive current $T$, supported
in $\bOmega$, of bidimension $(1,1)$, such that
\begin{equation}\label{eq:complex-jensen-current}
        \int_{\partial\Omega}\phi\,d\mu-\phi(x)
        =
        \langle T,\ddc\phi\rangle
\end{equation}
for smooth test functions $\phi$ on a neighbourhood of $\bOmega$.  The
shorthand
\[
        \ddc T=\mu-\delta_x
\]
means precisely \eqref{eq:complex-jensen-current}; the boundary measure
is part of the pairing on a neighbourhood of the closure.  Positivity of
$T$ is precisely the current-theoretic reflection of the Jensen inequality:
if $\phi$ is plurisubharmonic, then $\ddc\phi\geq0$ and hence
\[
        \phi(x)\leq\int_{\partial\Omega}\phi\,d\mu .
\]
Conversely, the Duval--Sibony--Harvey--Lawson bipolar argument gives
such fillings in the present setting.  Let $K=\bOmega$ and consider the
cone
\[
        \{\ddc S:\ S\geq0,\ \supp S\subset K\}.
\]
This cone is closed, since pairing with a fixed strictly
plurisubharmonic function controls the masses of the positive currents.
Its polar consists of smooth test functions with $\ddc\phi\geq0$ on
$K$, hence with plurisubharmonic restriction to $\Omega$.  Since
$\mu\in J_{x,\PSH}^{\partial}$, the bipolar theorem gives
$\mu-\delta_x=\ddc T$ for some positive current $T$ supported in
$\bOmega$; compare Duval--Sibony~\cite{DuvalSibony:95} and
Harvey--Lawson~\cite[Corollary~3.12]{HarveyLawsonCurrents:09}.  The
additional determinant cost below is the inhomogeneous feature.

\begin{proposition}[Complex Jensen currents and lower bound]
\label{prop:complex-current-lower-bound}
Let $T$ be a positive current satisfying
\eqref{eq:complex-jensen-current}.  Fix once and for all the contracted
basis identifying positive $(n-1,n-1)$-forms with positive Hermitian
cofactor matrices, and write the absolutely continuous part as
$T_{ac}=\Theta\,dV$; see, for instance, \cite[Chapter~3]{Klimek:91}.
Then there is a positive constant $c_{\C}=c_{\C}(\ddc,dV)$, depending
only on these conventions, such that, for every smooth plurisubharmonic
$u$ with $(\ddc u)^n\geq f\,dV$,
\begin{equation}\label{eq:complex-stress-lower-bound}
        \int_{\partial\Omega}u\,d\mu-u(x)
        \geq
        c_{\C}\int_{\Omega} f^{1/n}\det(\Theta)^{1/n}\,dV .
\end{equation}
\end{proposition}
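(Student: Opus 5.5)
The argument will follow the real stress estimate of Proposition~\ref{prop:real-stress-lower-bound}, with the real Hessian replaced by the complex Hessian and the stress replaced by the current $T$.

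First, since $u$ is smooth on a neighbourhood of $\bOmega$ (or after a standard extension of a $C^2(\bOmega)$ function), I apply \eqref{eq:complex-jensen-current} with $\phi=u$. This gives
\[
        \int_{\partial\Omega}u\,d\mu-u(x)=\langle T,\ddc u\rangle .
\]
Next I split $T=T_{ac}+T_s$ into its absolutely continuous and singular parts. Both parts are again positive currents of bidimension $(1,1)$, because the Lebesgue decomposition of the coefficient measures of a positive current preserves positivity. Since $u$ is plurisubharmonic, $\ddc u\geq0$, so $\langle T_s,\ddc u\rangle\geq0$. We may therefore discard the singular part and obtain
\[
        \int_{\partial\Omega}u\,d\mu-u(x)\geq\int_\Omega \Theta\wedge\ddc u,
\]
where $\Theta$ is viewed as a positive $(n-1,n-1)$-form.

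The pointwise step is linear algebra. In the fixed contracted basis, $\Theta$ corresponds to a positive semidefinite Hermitian matrix $\Theta$, and $\ddc u$ to the complex Hessian $H=(u_{j\bar k})$. The wedge product then takes the form $\Theta\wedge\ddc u=\kappa\,\operatorname{tr}(\Theta H)\,dV$ for a normalising constant $\kappa>0$ fixed by the conventions. Similarly, $(\ddc u)^n=\kappa'\det H\,dV$ for another constant $\kappa'>0$.

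By the matrix AM--GM inequality, applied to the eigenvalues of $\Theta^{1/2}H\Theta^{1/2}$ and extended by approximation in the semidefinite case as in the real proof,
\[
        \operatorname{tr}(\Theta H)\geq n\det(\Theta)^{1/n}\det(H)^{1/n}.
\]
The hypothesis $(\ddc u)^n\geq f\,dV$ gives $\det H\geq f/\kappa'$. Integrating then yields \eqref{eq:complex-stress-lower-bound} with $c_{\C}=n\kappa(\kappa')^{-1/n}$, which depends only on the normalisations of $\ddc$ and $dV$.

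The main obstacle is bookkeeping rather than analysis. I need to make sure that the identification of positive $(n-1,n-1)$-forms with positive Hermitian matrices is chosen so that $\Theta\wedge\ddc u$ really is a positive multiple of $\operatorname{tr}(\Theta H)$, rather than a trace against a transpose or conjugate. A transpose or conjugate would not change positivity or the determinant, so even that case is harmless.

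A second point to check is that testing \eqref{eq:complex-jensen-current} against $u$ is legitimate. This is immediate if $u$ is smooth near $\bOmega$. Otherwise I would approximate $u$ by smooth functions on a neighbourhood of $\bOmega$, for instance $u+\varepsilon|z|^2$ combined with mollification on slightly larger domains. Since $T$ has finite mass and compact support, the pairing passes to the limit.
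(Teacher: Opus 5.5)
Your proposal is correct and follows the paper's proof essentially step for step. You test the Jensen current identity against $u$, discard the singular part of $T$ using $\ddc u\geq0$, apply the matrix AM--GM inequality to the eigenvalues of $\Theta^{1/2}H\Theta^{1/2}$, and integrate using $(\ddc u)^n\geq f\,dV$. Your explicit constant $c_{\C}=n\kappa(\kappa')^{-1/n}$ and your remarks on the transpose convention and on the legitimacy of testing against $u$ are harmless bookkeeping that the paper leaves implicit.
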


\begin{proof}
For any such current,
\[
        \int_{\partial\Omega}u\,d\mu-u(x)
        =
        \langle T,\ddc u\rangle .
\]
The singular part of $T$ gives a nonnegative contribution, so
\[
        \langle T,\ddc u\rangle
        \geq
        \int_{\Omega}\ddc u\wedge T_{ac}.
\]
In the fixed contracted-basis convention, the pointwise matrix AM--GM
inequality for positive Hermitian matrices gives
\begin{equation}\label{eq:complex-stress-amgm}
        \ddc u\wedge T_{ac}
        \geq
        c_{\C}
        \left(\frac{(\ddc u)^n}{dV}\right)^{1/n}
        \det(\Theta)^{1/n}\,dV,
\end{equation}
which is the same matrix AM--GM inequality used in
Proposition~\ref{prop:real-stress-lower-bound}: if $H$ is the Hermitian
Hessian matrix of $u$, we apply the scalar AM--GM to the eigenvalues of
$\Theta^{1/2}H\Theta^{1/2}$, using the Hermitian square root; see again
\cite[Chapter~IV]{Bhatia:97}.  Since
$(\ddc u)^n\geq f\,dV$, integration gives
\eqref{eq:complex-stress-lower-bound}.
\end{proof}

This construction suggests the relaxed complex deficit
\begin{equation}\label{eq:complex-relaxed-deficit}
        \mathcal D_f^{\PSH}(x,\mu)
        =
        \sup_T
        c_{\C}\int_{\Omega} f^{1/n}\det(\Theta_T)^{1/n}\,dV,
\end{equation}
where the supremum is over positive Jensen currents satisfying
\eqref{eq:complex-jensen-current}, and $T_{ac}=\Theta_T\,dV$.  The singular part
of $T$ contributes to the ordinary Jensen gap but not to the full-dimensional
determinant in \eqref{eq:complex-relaxed-deficit}.  Thus the inhomogeneous
Monge--Amp\`ere lower bound is detected by the absolutely continuous part of the
current.  In the smooth or approximable setting covered by the proposition,
$\mathcal D_f^{\PSH}(x,\mu)\leq B_f^{\PSH}(x,\mu)$. Again, we leave the question
of whether equality holds under weaker natural hypotheses open; it would amount to a
nonlinear adjoint identity for the complex deficit.

\begin{remark}[Formal equality case]
The formal equality case gives the expected adjoint equation.  If a
smooth extremal $u$ and a maximizing current $T$ exist, and if no
relaxation gap occurs, equality in \eqref{eq:complex-stress-amgm}
would force the absolutely continuous part of $T$ to be proportional to
$(\ddc u)^{n-1}$:
\[
        T_{ac}=a\,(\ddc u)^{n-1}
\]
after absorbing normalization constants into $a$.  The current equation
then becomes, formally,
\[
        \ddc\bigl(a\,(\ddc u)^{n-1}\bigr)
        =
        \mu-\delta_x,
\]
together with
\[
        (\ddc u)^n=f\,dV.
\]
In this picture the deficit is the potential of $f$ against the
nonnegative adjoint Green density $a$ for the linearized complex
Monge--Amp\`ere operator at the extremal plurisubharmonic function.
Section~\ref{sec:linearized-harmonic-measure} proves this picture in
the smooth strictly elliptic case, with the Green kernel of $L_u$
playing the role of $a$.
\end{remark}

\subsection*{Upper bounded and bounded competitors}

There is also a trace version in which the competitors are not assumed
to be continuous on $\bOmega$.  If $u$ is upper bounded and
plurisubharmonic on $\Omega$, write
\[
        u^*(\zeta)
        =
        \limsup_{\Omega\ni z\to \zeta}u(z),
        \qquad
        \zeta\in\bOmega,
\]
for its upper semicontinuous regularization on the closure.  Suppose
that $\mathcal G$ is a cone of upper bounded plurisubharmonic functions
whose boundary regularizations are bounded upper semicontinuous on
$\partial\Omega$, and that $\calQ$ is a nonempty class of upper bounded
plurisubharmonic functions with bounded upper semicontinuous boundary
traces.  Assume that the pair $(\calQ,\mathcal G)$ satisfies
Assumption~\ref{ass:abstract} with the trace $u_E=u^*|_{\partial\Omega}$.
Define
\[
        P_{\calQ}\varphi(x)
        =
        \sup\left\{
        u(x):
        u\in\calQ,\
        u^*|_{\partial\Omega}\leq\varphi
        \right\}.
\]
For a boundary measure $\mu$ define
\[
        B_{\calQ}(x,\mu)
        =
        \inf_{u\in\calQ}
        \left(
        \int_{\partial\Omega}u^*\,d\mu-u(x)
        \right).
\]
The corresponding boundary Jensen class is
\[
        J_x^\partial(\mathcal G)
        =
        \left\{
        \mu\in\calP(\partial\Omega):
        v(x)\leq\int_{\partial\Omega}v^*\,d\mu
        \text{ for all }v\in\mathcal G
        \right\}.
\]

\begin{proposition}[Trace duality]\label{prop:psh-trace-duality}
With the hypotheses just stated, if
$P_{\calQ}\varphi(x)$ is finite, then
\[
        P_{\calQ}\varphi(x)
        =
        \inf_{\mu\in J_x^\partial(\mathcal G)}
        \left(
        \int_{\partial\Omega}\varphi\,d\mu
        -
        B_{\calQ}(x,\mu)
        \right),
\]
\end{proposition}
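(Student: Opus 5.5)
This is a direct specialization of Theorem~\ref{thm:abstract-edwards}, so the plan is to check that its hypotheses hold in the trace setting and then read off the formula. Take $E=\partial\Omega$, which is compact Hausdorff since $\Omega$ is bounded. The distinguished point is $x\in\Omega$. The admissible family is $\calA=\calQ$, the cone is $\mathcal F=\mathcal G$, and the trace operation is $u\mapsto u_E:=u^*|_{\partial\Omega}$. Each $u\in\calQ$ has a finite value $u(x)$, since $u$ is plurisubharmonic and upper bounded on $\Omega$; in the degenerate case $u\equiv-\infty$ it can be discarded without changing any supremum or infimum. By hypothesis each trace $u^*|_{\partial\Omega}$ is bounded and upper semicontinuous. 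The same holds for elements of $\mathcal G$. The standing requirements of Section~2 are therefore met.

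Next I will check that the three definitions match term by term. The envelope $P_{\calQ}\varphi(x)$ defined before the proposition is exactly $P_{\calA}\varphi(x)$ for this trace. The deficit $B_{\calQ}(x,\mu)$ is exactly $B_{\calA}(x,\mu)$. The class $J_x^\partial(\mathcal G)$ is exactly $J_x^E(\mathcal F)$, because the Jensen condition is tested against $v_E=v^*|_{\partial\Omega}$.

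Assumption~\ref{ass:abstract} for $(\calQ,\mathcal G)$ with this trace is assumed explicitly in the statement. It is still worth confirming that the inequalities it asks for are the natural ones and do not conflict with the upper regularization. For upper limits, $\limsup(tu+(1-t)v)\leq t\limsup u+(1-t)\limsup v$, and likewise $(u+tv)^*\leq u^*+tv^*$ for $t\geq0$. The constant case $(u+a)^*=u^*+a$ is immediate. So these inequalities hold automatically for the upper regularization. The only genuinely structural content is membership: that $\calQ$ is convex and stable under $u\mapsto u+a$ and $u\mapsto u+tv$. This is part of the hypothesis and need not be reproved.

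Finiteness of $P_{\calQ}\varphi(x)$ is also assumed, and $\varphi\in C(\partial\Omega)$. Theorem~\ref{thm:abstract-edwards} then applies verbatim and yields the displayed formula.

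The step that needs the most care is the abstract proof's use of the continuous-majorant identity $\inf_{g\geq h}\int g\,d\lambda=\int h\,d\lambda$ with $h=u^*|_{\partial\Omega}$. It requires the trace to be bounded and upper semicontinuous on $E$ itself. Upper semicontinuity of $u^*$ on $\bOmega$ restricts to upper semicontinuity on $\partial\Omega$, and boundedness from above follows from $u$ being upper bounded. Boundedness from below is exactly the stated hypothesis on the traces, and I would make sure to invoke it at this point. With that, the separation argument and the mass and Jensen identifications go through unchanged, since they only use the trace inequalities and the existence of some $u_0\in\calQ$, which holds because $\calQ$ is nonempty.
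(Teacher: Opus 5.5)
Your proposal is correct and follows the paper's own proof. Like the paper, you specialize Theorem~\ref{thm:abstract-edwards} with $E=\partial\Omega$ and trace $u^*|_{\partial\Omega}$, note that the trace inequalities are automatic for upper regularizations, and use bounded upper semicontinuity of the traces to justify the continuous-majorant formula.

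One small correction: an upper bounded plurisubharmonic function can equal $-\infty$ at $x$ without being identically $-\infty$ (e.g.\ $\log|z_1-x_1|$). Such competitors still do not affect the supremum, contribute $+\infty$ to the deficit, and elements $v\in\mathcal G$ with $v(x)=-\infty$ impose no Jensen condition, so your discarding remark still disposes of them.
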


\begin{proof}
This is Theorem~\ref{thm:abstract-edwards} with
$E=\partial\Omega$ and the trace $u_E=u^*|_{\partial\Omega}$.  The
trace compatibilities are the elementary inequalities
\[
        (t u+(1-t)v)^*
        \leq
        t u^*+(1-t)v^*,
        \qquad
        (u+a)^*=u^*+a,
\]
restricted to $\partial\Omega$.  In this
notation the separating set is
\[
        \calK_{\calQ}
        =
        \left\{
        (g,u(x)-s):
        u\in\calQ,\
        g\in C(\partial\Omega),\
        g\geq u^*|_{\partial\Omega},\
        s\geq0
        \right\}.
\]
The bounded upper semicontinuity of the traces ensures that the
infimum over continuous majorants $g$ is
$\int_{\partial\Omega}u^*\,d\mu$, by the continuous-majorant formula
used in the proof of the abstract theorem.
\end{proof}

Taking $\mathcal G$ to be a suitable cone of upper bounded
plurisubharmonic functions and taking $\calQ=\calP_f^{ub}$, where
$\calP_f^{ub}$ denotes an upper bounded Monge--Amp\`ere subsolution
class with the trace and structural properties required above, gives
the upper bounded formula.  In the rest of the paper we use only the
following bounded Bedford--Taylor instance.

For the Bedford--Taylor application we shall use the bounded subclass
\[
        \calP_f^b
        =
        \left\{
        u\in\PSH(\Omega)\cap L^\infty(\Omega):
        (\ddc u)^n\geq f\,dV
        \right\}.
\]
Define $P_f^b$, $B_f^b$, and $J_{x,\PSH}^{b,\partial}$ by
\[
        P_f^b\varphi(x)
        =
        \sup\left\{
        u(x):
        u\in\calP_f^b,\
        u^*|_{\partial\Omega}\leq\varphi
        \right\},
\]
\[
        B_f^b(x,\mu)
        =
        \inf_{u\in\calP_f^b}
        \left(
        \int_{\partial\Omega}u^*\,d\mu-u(x)
        \right),
\]
and
\[
        J_{x,\PSH}^{b,\partial}
        =
        \left\{
        \mu\in\calP(\partial\Omega):
        v(x)\leq\int_{\partial\Omega}v^*\,d\mu
        \text{ for all }v\in\PSH(\Omega)\cap L^\infty(\Omega)
        \right\}.
\]

\begin{proposition}[Bounded Bedford--Taylor dual formula]
\label{prop:psh-bounded-duality}
Let $\Omega\subset\C^n$ be bounded, let $x\in\Omega$, and let
$f\in C(\bOmega)$, $f\geq0$.  Then for every
$\varphi\in C(\partial\Omega)$,
\[
        P_f^b\varphi(x)
        =
        \inf_{\mu\in J_{x,\PSH}^{b,\partial}}
        \left(
        \int_{\partial\Omega}\varphi\,d\mu
        -
        B_f^b(x,\mu)
        \right).
\]
\end{proposition}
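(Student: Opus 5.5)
The plan is to obtain the formula as an instance of Proposition~\ref{prop:psh-trace-duality}. We take $\calQ=\calP_f^b$ and $\mathcal G=\PSH(\Omega)\cap L^\infty(\Omega)$, with trace $u_E=u^*|_{\partial\Omega}$. With these choices $J_x^\partial(\mathcal G)=J_{x,\PSH}^{b,\partial}$ and $B_{\calQ}=B_f^b$ by definition. It therefore remains to check two things: that $(\calP_f^b,\mathcal G)$ satisfies Assumption~\ref{ass:abstract} with this trace, and that $P_f^b\varphi(x)$ is finite.

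\emph{Trace properties.} Every $u\in\calP_f^b$ is bounded, so $u^*$ is a bounded upper semicontinuous function on $\bOmega$, and its restriction to $\partial\Omega$ is a legitimate trace. The trace inequalities $(tu+(1-t)v)^*\leq tu^*+(1-t)v^*$, $(u+a)^*=u^*+a$ and $(u+sv)^*\leq u^*+sv^*$ are the elementary subadditivity of $\limsup$ for $t,s\geq 0$.

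\emph{Structural properties.} Invariance under constants is immediate. For convexity and for stability under $u\mapsto u+sv$ with $v\in\mathcal G$, $s\geq0$, we repeat the argument of Lemma~\ref{lem:complex-structural} for bounded plurisubharmonic functions. Bedford--Taylor mixed products $(\ddc u)^k\wedge(\ddc v)^{n-k}$ are well defined, positive and multilinear for locally bounded plurisubharmonic functions \cite{BedfordTaylor:82}. Dinew's inequality \cite[Theorem~1.3]{Dinew:09} is stated for bounded plurisubharmonic functions, which is exactly the generality quoted there. So the binomial expansion again gives $(\ddc(tu+(1-t)v))^n\geq f\,dV$, and $(\ddc(u+sv))^n\geq(\ddc u)^n\geq f\,dV$. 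Boundedness is clearly preserved by both operations.

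\emph{Finiteness.} The quadratic function $q$ of Lemma~\ref{lem:strict-complex}, minus a large constant, lies in $\calP_f^b$ (it is continuous on $\bOmega$, so $q^*=q$) and lies below $\varphi$ on $\partial\Omega$. Hence $P_f^b\varphi(x)>-\infty$. For the upper bound, take $u\in\calP_f^b$ with $u^*\leq\varphi$ on $\partial\Omega$. Then $\limsup_{z\to\zeta}u(z)\leq\max\varphi$ for every $\zeta\in\partial\Omega$, and the maximum principle for plurisubharmonic (indeed subharmonic) functions on the bounded domain $\Omega$ gives $u\leq\max_{\partial\Omega}\varphi$. Hence $P_f^b\varphi(x)\leq\max\varphi<\infty$.

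The step I expect to need the most care is the structural lemma in the bounded setting. Two points must be checked there. First, Dinew's mixed inequality must indeed apply to merely bounded (not continuous) functions with density bound $f\,dV$. Second, the measure inequalities have to be understood in the Bedford--Taylor sense for bounded functions. If Dinew's inequality were only available for continuous functions, the fallback is to approximate each $u\in\calP_f^b$ locally by decreasing continuous plurisubharmonic functions preserving an almost-$f$ lower bound. That route is delicate, so the first thing to try is to cite the bounded version of Dinew's theorem directly. Everything else is then a direct application of Proposition~\ref{prop:psh-trace-duality}.
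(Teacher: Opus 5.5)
Your proposal is correct and takes the same route as the paper. You apply Proposition~\ref{prop:psh-trace-duality} with $\calQ=\calP_f^b$ and $\mathcal G=\PSH(\Omega)\cap L^\infty(\Omega)$, take convexity from Dinew's mixed inequality for bounded functions and stability from positivity of Bedford--Taylor mixed products, and get finiteness from the quadratic subsolution of Lemma~\ref{lem:strict-complex} plus the maximum principle; your explicit checks of the $\limsup$ trace inequalities and of $q^*=q$ on $\partial\Omega$ only spell out details the paper leaves implicit.
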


\begin{proof}
The Bedford--Taylor products of bounded plurisubharmonic functions are
well-defined, positive, local, and continuous under decreasing locally
bounded limits \cite{BedfordTaylor:82}.  Convexity of $\calP_f^b$ follows
from Dinew's mixed Monge--Amp\`ere inequality
\cite[Theorem~1.3]{Dinew:09}, in the precise form stated in the proof of
Lemma~\ref{lem:complex-structural}.  Stability under addition of bounded
plurisubharmonic functions follows from positivity of mixed products.
Constants are harmless, and Lemma~\ref{lem:strict-complex} gives a
bounded admissible competitor below the prescribed boundary data.  Thus
Proposition~\ref{prop:psh-trace-duality} applies with
$\mathcal G=\PSH(\Omega)\cap L^\infty(\Omega)$ and
$\calQ=\calP_f^b$.  The maximum principle gives finiteness of the
envelope.
\end{proof}

\begin{proposition}[Standard Bedford--Taylor stability and regularization]
\label{lem:bt-stability-regularization}
Let $\Omega\subset\C^n$ be bounded and let
$f\in C(\bOmega)$, $f\geq0$.
\begin{enumerate}[label=\textup{(\roman*)}]
    \item If bounded plurisubharmonic functions $v$ and $w$ satisfy
    $(\ddc v)^n\geq f\,dV$ and $(\ddc w)^n\geq f\,dV$, then
    $\max\{v,w\}$ satisfies the same inequality.
    \item If $v_j\downarrow v$ is a decreasing locally bounded sequence
    of plurisubharmonic functions and
    $(\ddc v_j)^n\geq f\,dV$ for all $j$, then
    $(\ddc v)^n\geq f\,dV$.
    \item Let $u\in\PSH(\Omega)\cap L^\infty(\Omega)$, and assume
    $(\ddc u)^n\geq f\,dV$.  Let $\chi_r$ be a monotone radial
    smoothing kernel, so that $u_r=u*\chi_r$ is smooth
    plurisubharmonic on $\Omega_r$ and decreases to $u$ on compact
    subsets as $r\downarrow0$.
    For every $a>0$ there is $r_0>0$ such that
    \[
            \bigl(\ddc(u_r+a|z|^2)\bigr)^n\geq f\,dV
            \quad\text{on }\Omega_{2r}
    \]
    whenever $0<r<r_0$, where
    $\Omega_{2r}=\{z:\operatorname{dist}(z,\partial\Omega)>2r\}$.
\end{enumerate}
\end{proposition}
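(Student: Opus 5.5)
Each of the three parts is a standard consequence of Bedford--Taylor theory. I would prove them in the order (ii), (i), (iii), since (ii) feeds into the regularization argument.

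\emph{Part (ii).} Weak convergence of $(\ddc v_j)^n$ to $(\ddc v)^n$ along decreasing locally bounded sequences is the Bedford--Taylor monotone continuity theorem \cite{BedfordTaylor:82}. For a nonnegative test function $\chi\in C_c(\Omega)$ we have $\int\chi(\ddc v_j)^n\geq\int\chi f\,dV$ for every $j$. Passing to the limit gives the inequality for $v$.

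\emph{Part (i).} I would use the maximum principle for Monge--Amp\`ere measures:
\[
(\ddc\max\{v,w\})^n\geq \mathbf 1_{\{v\geq w\}}(\ddc v)^n+\mathbf 1_{\{v<w\}}(\ddc w)^n .
\]
This is the standard Demailly / Bedford--Taylor inequality, obtained from locality on the plurifine open sets $\{v>w\}$ and $\{v<w\}$. On $\{v=w\}$ one uses $\mathbf 1_{\{v\ge w\}}(\ddc v)^n$, which follows from the corresponding bound for $\max\{v,w+\varepsilon\}$ as $\varepsilon\downarrow0$, together with part (ii). Since both measures on the right dominate $f\,dV$ on their respective sets, the claim follows. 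If one wants to avoid the plurifine argument, an alternative route is to use that $f\,dV$ puts no mass on pluripolar sets and to follow Ko\l{}odziej's standard proof; here I would simply cite the literature.

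\emph{Part (iii).} This is the main step. Since $u$ is bounded with $(\ddc u)^n\geq f\,dV$, the mixed-product argument behind Lemma~\ref{lem:real-structural} transfers: convolution of the positive $(1,1)$-current $\ddc u$ gives $\ddc u_r=(\ddc u)*\chi_r$. I would first establish the pointwise bound
\[
\det(\ddc u_r)^{1/n}\geq (f^{1/n})*\chi_r \quad\text{on }\Omega_r
\]
in the Hermitian-determinant normalization. Formally this is concavity of $\det^{1/n}$ on positive Hermitian matrices (Jensen's inequality against the probability measure $\chi_r(z-\cdot)\,dV$), applied to the absolutely continuous part of $\ddc u$, while the singular part only increases the determinant. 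The catch is that for a merely bounded $u$, the density of $(\ddc u)^n$ is not the determinant of the absolutely continuous part of $\ddc u$.

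This is the obstacle, and I would handle it with Dinew's mixed inequality \cite{Dinew:09}. Test against the constant-coefficient positive forms $\ddc\ell_1\wedge\dots\wedge\ddc\ell_{n-1}$ for quadratic $\ell_i$ with $(\ddc\ell_i)^n=g_i\,dV$. Dinew gives $\ddc u\wedge\ddc\ell_1\wedge\cdots\wedge\ddc\ell_{n-1}\geq (f g_1\cdots g_{n-1})^{1/n}dV$, and this bound is linear in $\ddc u$, hence preserved by convolution. Taking the infimum over such $\ell_i$ recovers $\det^{1/n}$ of the smooth form $\ddc u_r$ by the Hermitian AM--GM characterization $\det(H)^{1/n}=\inf\{\tfrac1n\operatorname{tr}(HA):A>0,\det A=1\}$. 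This yields the displayed bound.

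Then $\bigl(\ddc(u_r+a|z|^2)\bigr)^n\geq\bigl((f^{1/n})*\chi_r+ a c'\bigr)^n dV$ by monotonicity and concavity of $\det^{1/n}$, where $c'>0$ is a normalization constant. Since $f^{1/n}$ is uniformly continuous on $\bOmega$, there is $r_0$ with $|(f^{1/n})*\chi_r-f^{1/n}|<ac'$ on $\Omega_{2r}$ for $r<r_0$. Hence the right side is at least $f\,dV$ there. The margin $2r$ rather than $r$ just keeps the convolution support and the comparison inside $\bOmega$.
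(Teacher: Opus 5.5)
Your proposal is correct. For (i) and (ii) it matches the paper, which likewise just invokes the Bedford--Taylor maximum principle and the monotone convergence theorem. There is one harmless slip in your $\varepsilon$-sketch. Shifting to $\max\{v,w+\varepsilon\}$ attaches the equality set to $w$ and yields $\mathbf 1_{\{v\le w\}}(\ddc w)^n$, not $\mathbf 1_{\{v\geq w\}}(\ddc v)^n$; to get the latter use $\max\{v+\varepsilon,w\}$. Either version gives the bound $f\,dV$.

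The genuine difference is in (iii).

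\emph{The paper's route.} It takes the key estimate $(\ddc u_r)^n\geq (f^{1/n}*\chi_r)^n\,dV$ directly from the main theorem of Guedj--Lu--Zeriahi. It then does exactly what your last step does: it adds $a|z|^2$ via the Minkowski determinant inequality and absorbs the convolution error using uniform continuity of $f^{1/n}$.

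\emph{Your route.} You prove that estimate instead of citing it.
\begin{itemize}
\item Dinew's inequality, applied to $u$ and $n-1$ copies of a Hermitian quadratic $\ell$, linearizes the Monge--Amp\`ere lower bound into the inequalities $\ddc u\wedge(\ddc\ell)^{n-1}\geq c\,f^{1/n}(\det L)^{(n-1)/n}\,dV$.
\item Since $\ddc\ell$ has constant coefficients, these inequalities commute with convolution against the nonnegative kernel $\chi_r$.
\item The pointwise infimum over $\ell$ returns $\det_{dV}(\ddc u_r)^{1/n}$, because the normalized cofactors of positive $L$ run through all $A>0$ with $\det A=1$.
\end{itemize}
This correctly sidesteps the difficulty you identified with working through the absolutely continuous part of $\ddc u$.

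\emph{What each buys.} The paper's route is shorter. Yours has only one external input, Dinew's inequality, which the paper already uses in Lemma~\ref{lem:complex-structural}. That makes the regularization step self-contained. It also makes transparent why the density bound survives convolution: it is equivalent to a family of linear inequalities, each of which is preserved.
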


\begin{proof}
The first assertion is the Bedford--Taylor maximum principle for
locally bounded plurisubharmonic functions:
\[
        (\ddc\max\{v,w\})^n
        \geq
        \mathbf 1_{\{v\geq w\}}(\ddc v)^n
        +
        \mathbf 1_{\{v<w\}}(\ddc w)^n .
\]
The second assertion is the Bedford--Taylor monotone convergence
theorem for locally bounded decreasing sequences.  See
\cite[Theorem~2.4]{BedfordTaylor:82} and the Bedford--Taylor maximum
principle from \cite{BedfordTaylor:76,BedfordTaylor:82}.

For the regularization statement, put $F=f^{1/n}$.  The main theorem of
Guedj--Lu--Zeriahi \cite[Main theorem]{GuedjLuZeriahi:19} gives, for standard
regularizations on $\Omega_{2r}$,
\[
        (\ddc u_r)^n\geq (F*\chi_r)^n\,dV .
\]
Equivalently,
\[
        \det_{dV}(\ddc u_r)^{1/n}
        \geq F*\chi_r ,
\]
where $\det_{dV}$ denotes the determinant relative to the fixed volume
form.  Let
\[
        c_{\ddc,dV}
        =
        \det_{dV}(\ddc |z|^2)^{1/n}>0 .
\]
The Hermitian Minkowski determinant inequality, equivalently the
concavity and homogeneity of $\det_{dV}^{1/n}$ on positive Hermitian
matrices, gives
\[
        \det_{dV}\bigl(\ddc(u_r+a|z|^2)\bigr)^{1/n}
        \geq
        F*\chi_r+c_{\ddc,dV}a .
\]
Since $F$ is uniformly continuous on $\bOmega$, the last quantity is at
least $F$ on $\Omega_{2r}$ for all sufficiently small $r$.  This proves
the local regularization assertion.
\end{proof}

Next, we prove a global approximation theorem for bounded plurisubharmonic
functions with control of the Monge--Amp\`ere measures. More precisely,

\begin{theorem}[Bounded inhomogeneous approximation]\label{thm:psh-inhom-approx}
Let $\Omega\subset\C^n$ be a bounded B-regular domain and let
$f\in C(\bOmega)$, $f\geq0$.  Let
$u\in\PSH(\Omega)\cap L^\infty(\Omega)$, and assume that
\[
        (\ddc u)^n\geq f\,dV
\]
in the Bedford--Taylor sense.  Then there exists a decreasing sequence
$u_j\in\PSH(\Omega)\cap C(\bOmega)$ such that
\[
        (\ddc u_j)^n\geq f\,dV
        \quad\text{and}\quad
        u_j\searrow u^*
        \quad\text{on }\bOmega.
\]
\end{theorem}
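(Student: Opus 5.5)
The approach is the classical Wikström-type gluing on B-regular domains: regularize locally, then patch the regularizations to continuous functions with continuous boundary values using maxima with strongly plurisubharmonic barriers, keeping the inhomogeneous bound at each stage.

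\emph{Step 1: exhaustion and barrier.} Since $\Omega$ is B-regular, there is $\rho\in\PSH(\Omega)\cap C(\bOmega)$ with $\rho|_{\partial\Omega}=0$ and $\rho<0$ in $\Omega$. Replacing $\rho$ by $\rho+A(|z|^2-R^2)$ with $\Omega\Subset B(0,R)$ makes it strictly plurisubharmonic. By Sibony's theory, every $\psi\in C(\partial\Omega)$ has a plurisubharmonic extension in $C(\bOmega)$. Let $h\in\PSH(\Omega)\cap C(\bOmega)$ be a continuous function with $u^*\le h$ on $\partial\Omega$, approximating $u^*|_{\partial\Omega}$ from above. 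Such $h$ exist by writing the upper semicontinuous trace $u^*|_{\partial\Omega}$ as the decreasing limit of continuous $\psi_j$, and taking the Perron--Bremermann solutions $h_j$ with data $\psi_j$. These are continuous because $\Omega$ is B-regular.

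\emph{Step 2: local smoothing and gluing.} Fix $a_j\downarrow0$ and apply Proposition~\ref{lem:bt-stability-regularization}(iii). For small $r_j$, the function $w_j=u_{r_j}+a_j|z|^2-c_j$ is smooth on $\Omega_{2r_j}$ with $(\ddc w_j)^n\ge f\,dV$ there. The constants $c_j$ are chosen so that $w_j\ge u$, using that $u_r$ decreases to $u$. Next set $g_j=h_j+M_j\rho+a_j|z|^2-c'_j$. Take $M_j$ large so that $(\ddc g_j)^n\ge f\,dV$; this follows from the Minkowski inequality as in (iii) and the strict plurisubharmonicity of $\rho$.

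The glued function is
\[
v_j=\begin{cases}\max\{w_j,g_j\}&\text{on }\Omega_{2r_j},\\ g_j&\text{near }\partial\Omega.\end{cases}
\]
This is well defined provided $g_j>w_j$ on a collar around $\partial\Omega_{2r_j}$ and $g_j<w_j$ deep inside. To arrange this, use that $u$ is bounded and $M_j\rho$ is very negative on compacts; one may have to choose $r_j$ after $M_j$ and the collar. By Proposition~\ref{lem:bt-stability-regularization}(i), $v_j\in\PSH(\Omega)\cap C(\bOmega)$ with $(\ddc v_j)^n\ge f\,dV$, and $v_j=h_j$ on $\partial\Omega$, up to an adjustment making $c'_j$ vanish at the boundary.

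\emph{Step 3: monotonicity and convergence.} Replace $v_j$ by $u_j=\min$-free versions: define $u_j$ inductively as $\max$ constructions built so that $u_{j+1}\le u_j$. The simplest route is to choose the parameters with margins so that $v_{j+1}\le v_j-\delta_{j+1}$ is unnecessary, and instead take $u_j=v_j+2^{-j}$ with $v_{j+1}\le v_j+2^{-j-1}$. Adding constants preserves the class. Convergence to $u^*$ holds in the interior, since $w_j\to u$ and $g_j\to-\infty$ locally uniformly relative to $w_j$. At the boundary, $h_j\downarrow u^*|_{\partial\Omega}$. Near boundary points the limsup is controlled by $h_j$, and the limit of a decreasing sequence of continuous functions is upper semicontinuous and dominates $u^*$.

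\emph{Main obstacle.} The hardest step is the simultaneous choice of $h_j$, $M_j$, $r_j$, $c_j$ so that the gluing is legitimate while the sequence stays decreasing and converges to $u^*$ exactly, rather than only to something above it, at boundary points. I would first handle the interior: exact decreasing convergence there follows because the $w_j$-branch wins on compacts. At the boundary, I would rely on $u^*\le\limsup$ of the interior values together with $h_j\downarrow u^*|_{\partial\Omega}$, where $\limsup$ means the upper regularization on $\bOmega$.
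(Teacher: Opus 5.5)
Your outline follows the same plan as the paper. You regularize locally with a small quadratic correction. You glue by a maximum against a barrier made of a maximal extension plus a growing multiple of the exhaustion. You then monotonize by shrinking constant shifts, and that last device is right: the paper's ceiling functions are exactly $C_K=\widetilde u_K+\delta_K$, and it proves $C_m\le C_K$ for $m>K$. However, two steps fail as written.

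\textbf{The barrier has the wrong boundary values.} After you replace $\rho$ by $\rho+A(|z|^2-R^2)$, the exhaustion is strictly negative on $\partial\Omega$. So $g_j|_{\partial\Omega}$ contains $M_jA(|z|^2-R^2)$, which tends to $-\infty$ as $M_j\to\infty$, and you need $M_j\to\infty$ for interior convergence. No constant $c'_j$ repairs this, since the oscillation of this term on $\partial\Omega$ is $M_jA(\max_{\partial\Omega}|z|^2-\min_{\partial\Omega}|z|^2)$. Hence $v_j\neq h_j$ on $\partial\Omega$, and the boundary limit is wrong. The paper keeps $\rho=0$ on $\partial\Omega$ and puts the density into a fixed $q=A(|z|^2-R^2)$ with $(\ddc q)^n\geq f\,dV$. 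It takes $h_m$ maximal with data $\psi_m\downarrow\psi:=u^*-q$. Then $b_m=q+h_m+m\rho$ satisfies $(\ddc b_m)^n\geq(\ddc q)^n\geq f\,dV$ by positivity of mixed products, and its boundary values $q+\psi_m$ decrease to $u^*$. Alternatively, use the strictly plurisubharmonic exhaustion $\tilde h+A(|z|^2-R^2)$, where $\tilde h$ is maximal with data $A(R^2-|z|^2)$; this one does vanish on $\partial\Omega$.

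\textbf{The collar estimate is missing.} Both the gluing and your inequality $v_{j+1}\leq v_j+2^{-j-1}$ rest on it, and your suggested mechanism cannot supply it. Near $\partial\Omega$ the term $M_j\rho$ is close to $0$, so boundedness of $u$ and negativity of $M_j\rho$ on compacts say nothing. What you must compare is $u_{r_j}$ with the maximal extension in a boundary collar.

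This requires a strict gap $\psi_m>\psi$ on $\partial\Omega$; your non-strict $u^*\leq h$ is not enough. The steps are:
\begin{enumerate}[label=(\alph*)]
\item Set $\eta_m=\min_{\partial\Omega}(\psi_m-\psi)>0$.
\item Upper semicontinuity of $u^*-(q+h_m)$ gives a collar $V_m$ on which $u\leq q+h_m-\tfrac34\eta_m$ and $m\rho\geq-\eta_m/8$.
\item Choose $r_m$ so that $\{\operatorname{dist}(\cdot,\partial\Omega)\leq 4r_m\}\subset V_m$, and so that $q+h_m$ oscillates by less than $\eta_m/8$ on $r_m$-balls in $V_m$.
\end{enumerate}
With $A_m=u_{r_m}+a_m|z|^2$ and $a_mR^2<\eta_m/16$, this yields $b_m-A_m\geq\tfrac38\eta_m$ on $\{2r_m<\operatorname{dist}(\cdot,\partial\Omega)<3r_m\}$. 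That is what makes the max-gluing legitimate.

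Your constants $c_j$, ``chosen so that $w_j\geq u$'', push the wrong way. The paper shifts the regularization down instead, using $\max\{A_m-\delta_m,b_m\}$ with $\delta_m<\eta_m/16$.

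Moreover, since $r_m\leq r_K$, the level-$K$ collar estimate still applies to $u_{r_m}$ for $m>K$. It gives $A_m\leq b_K$ on $\Omega_{2r_m}\setminus\Omega_{2r_K}$. This is exactly the annulus where the earlier glued function equals its barrier while the later one may follow its regularized branch. Without this estimate your monotonicity claim has no support.

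Once both points are fixed, convergence on $\partial\Omega$ is immediate from the boundary values $q+\psi_m\downarrow u^*$. The difficulty in your last paragraph, about converging to something strictly above $u^*$ at boundary points, then does not arise.
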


\begin{proof}
We use the stability and local regularization facts collected in
Proposition~\ref{lem:bt-stability-regularization}.  The argument is based on the
monotone approximation construction of Wikström~\cite[Theorem~4.1]{Wikstrom:01},
with an additional quadratic correction in the
regularization step to preserve the lower density.

\emph{Step 1: Barriers and local regularizations.}
Choose a negative plurisubharmonic exhaustion
$\rho\in\PSH(\Omega)\cap C(\bOmega)$, with
$\rho=0$ on $\partial\Omega$; such an
exhaustion is available on bounded B-regular domains
\cite{Sibony:87,Demailly:87,Blocki:96}.  Choose
$R>0$ with $\Omega\Subset B(0,R)$ and choose $A>0$ so large that
\[
        q(z)=A(|z|^2-R^2)
\]
satisfies $q\leq0$ on $\bOmega$ and $(\ddc q)^n\geq f\,dV$.
Put
\[
        \psi=u^*|_{\partial\Omega}-q|_{\partial\Omega}.
\]
Choose $\psi_m\in C(\partial\Omega)$ with
$\psi_m>\psi$, $\psi_{m+1}\leq\psi_m$, and
$\psi_m\downarrow\psi$ on $\partial\Omega$.  By B-regularity, let
$h_m\in\PSH(\Omega)\cap C(\bOmega)$ be the maximal
plurisubharmonic function with boundary values $\psi_m$.  The
comparison principle gives $h_{m+1}\leq h_m$.

Set
\[
        \eta_m
        =
        \min_{\partial\Omega}
        \bigl(q+\psi_m-u^*\bigr)
        >0.
\]
Choose decreasing positive numbers $\delta_m\downarrow0$ and
$a_m\downarrow0$ so small that
\[
        \delta_m<\frac{\eta_m}{16},
        \qquad
        a_mR^2<\frac{\eta_m}{16}.
\]
Since $u^*-(q+h_m)$ is upper semicontinuous and is at most
$-\eta_m$ on $\partial\Omega$, there is a boundary collar $V_m$ such
that
\[
        u\leq q+h_m-\frac{3\eta_m}{4}
        \quad\text{on }V_m\cap\Omega.
\]
Shrinking $V_m$ if necessary, we may also arrange that
$m\rho\geq-\eta_m/8$ on $V_m$.

We now choose $r_m\downarrow0$ recursively.  The radius $r_m$ is chosen
small enough that Proposition~\ref{lem:bt-stability-regularization}
applies to
\[
        A_m=u_{r_m}+a_m|z|^2
\]
on $\Omega_{2r_m}:=\{z:\operatorname{dist}(z,\partial\Omega)>2r_m\}$,
small enough that
$\{z:\operatorname{dist}(z,\partial\Omega)\leq 4r_m\}\subset V_m$, and
small enough that the oscillation of $q+h_m$ on every ball of radius
$r_m$ contained in $V_m$ is less than $\eta_m/8$.  We also require
$r_m\leq r_{m-1}$ when $m>1$.

For later use we record the quantitative collar estimate.  Put
\[
        W_m
        =
        \{z\in\Omega:2r_m<\operatorname{dist}(z,\partial\Omega)<3r_m\}
        \subset V_m
\]
and
\[
        b_m=q+h_m+m\rho .
\]
If $z\in W_m$, then the convolution defining $u_{r_m}(z)$ only uses
points of $V_m$.  Hence
\[
        u_{r_m}(z)
        \leq
        q(z)+h_m(z)-\frac{5\eta_m}{8}.
\]
Together with $a_mR^2<\eta_m/16$, this gives
\[
        A_m(z)
        \leq
        q(z)+h_m(z)-\frac{9\eta_m}{16}
        \leq
        q(z)+h_m(z)-\frac{\eta_m}{2}.
\]
Since also $m\rho\geq-\eta_m/8$ on $W_m$, we obtain
\[
        b_m(z)-A_m(z)
        \geq
        \frac{3\eta_m}{8}
        \quad (z\in W_m).
\]

\emph{Step 2: Glued continuous subsolutions.}
Define
\[
        \widetilde u_m
        =
        \begin{cases}
        \max\{A_m-\delta_m,b_m\},
            & z\in\Omega_{2r_m},\\
        b_m,
            & z\in\Omega\setminus\Omega_{2r_m}.
        \end{cases}
\]
The collar estimate gives
\[
        b_m-(A_m-\delta_m)
        \geq
        \frac{3\eta_m}{8}+\delta_m>0
        \quad\text{on }W_m .
\]
Hence $\widetilde u_m=b_m$ on the open collar
$\{z\in\Omega:\operatorname{dist}(z,\partial\Omega)<3r_m\}$.  The
standard gluing lemma for plurisubharmonic functions
\cite[Proposition~2.9.26]{Klimek:91} shows that
$\widetilde u_m\in\PSH(\Omega)\cap C(\bOmega)$.  Moreover
$(\ddc A_m)^n\geq f\,dV$ by construction, while
\[
        (\ddc b_m)^n
        =
        \bigl(\ddc q+\ddc(h_m+m\rho)\bigr)^n
        \geq
        (\ddc q)^n
        \geq f\,dV,
\]
because all mixed Bedford--Taylor products are positive.  Hence the
maximum in the definition of $\widetilde u_m$ has the desired lower
bound on $\Omega_{2r_m}$ by
Proposition~\ref{lem:bt-stability-regularization}.  On the open collar
$\{\operatorname{dist}(\cdot,\partial\Omega)<3r_m\}$ the function is
$b_m$, so the same bound holds there.  These two open sets cover
$\Omega$, and Bedford--Taylor locality gives
$(\ddc\widetilde u_m)^n\geq f\,dV$ on all of $\Omega$.

The sequence $\widetilde u_m$ has the correct pointwise limit.  On
$\partial\Omega$ it equals $q+\psi_m$, and this decreases to $u^*$.
The boundary data $\psi_m$ are uniformly bounded, and hence so are the
maximal extensions $h_m$.  Therefore, for each fixed interior point,
$m\rho\to-\infty$, while $A_m-\delta_m\to u$; hence
$\widetilde u_m\to u$ in $\Omega$.

\emph{Step 3: decreasing rearrangement.}
It remains only to make the sequence decreasing without losing
continuity or the Monge--Amp\`ere inequality.  The role of the auxiliary
functions $C_K$ is to dominate the tail
$\{\widetilde u_m:m\geq K\}$ while still converging to $u^*$.  For
$K\geq1$ put
\[
        C_K
        =
        \begin{cases}
        \max\{A_K,b_K+\delta_K\},
            & z\in\Omega_{2r_K},\\
        b_K+\delta_K,
            & z\in\Omega\setminus\Omega_{2r_K}.
        \end{cases}
\]
The collar estimate gives
\[
        (b_K+\delta_K)-A_K
        \geq
        \frac{3\eta_K}{8}+\delta_K>0
        \quad\text{on }W_K .
\]
Thus $C_K=b_K+\delta_K$ on
$\{\operatorname{dist}(\cdot,\partial\Omega)<3r_K\}$, and the same
gluing and locality argument shows that
$C_K\in\PSH(\Omega)\cap C(\bOmega)$ and
$(\ddc C_K)^n\geq f\,dV$.

We claim that, for $m\geq K$,
\[
        \widetilde u_m\leq C_K.
\]
If $z\in\Omega_{2r_K}$, then $A_m$ is defined at $z$ and the monotonic
choices give
\[
        A_m\leq A_K,
        \qquad
        b_m\leq b_K
\]
because $u_{r_m}\leq u_{r_K}$, $a_m\leq a_K$, $h_m\leq h_K$, and
$m\rho\leq K\rho$.  Hence
$\widetilde u_m(z)\leq C_K(z)$.

It remains to consider the boundary collar
$\Omega\setminus\Omega_{2r_K}$.  If $z\notin\Omega_{2r_m}$, then
$\widetilde u_m(z)=b_m(z)\leq b_K(z)\leq C_K(z)$.  If
$z\in\Omega_{2r_m}$, then every point used by the convolution defining
$u_{r_m}(z)$ lies in $V_K$, since $r_m\leq r_K$ and
$\operatorname{dist}(z,\partial\Omega)\leq2r_K$.  On this ball,
$u\leq q+h_K-3\eta_K/4$, and the oscillation of $q+h_K$ is less than
$\eta_K/8$.  Therefore
\[
        A_m(z)
        \leq
        q(z)+h_K(z)-\frac{3\eta_K}{4}
        +\frac{\eta_K}{8}
        +a_mR^2
        \leq
        q(z)+h_K(z)-\frac{9\eta_K}{16}.
\]
Since $K\rho\geq-\eta_K/8$ on $V_K$ and
$\delta_K>0$, this implies
\[
        A_m(z)-\delta_m
        \leq
        q(z)+h_K(z)-\frac{9\eta_K}{16}
        \leq
        q(z)+h_K(z)-\frac{\eta_K}{8}
        \leq
        b_K(z)
        \leq
        b_K(z)+\delta_K.
\]
Together with $b_m\leq b_K$, this proves the claim in the collar.
The same estimates also give the stronger comparison needed for the
ceiling functions: if $m>K$, then $A_m\leq C_K$ on
$\Omega_{2r_m}$, and
\[
        b_m+\delta_m\leq b_K+\delta_K\leq C_K.
\]
Consequently $C_m\leq C_K$ for every $m>K$.

For each fixed point of $\Omega$, the term $b_K+\delta_K$ tends to
$-\infty$ because $K\rho\to-\infty$, whereas $A_K\to u$.  On the
boundary, $C_K=q+\psi_K+\delta_K$, which decreases to
$u^*|_{\partial\Omega}$.  Thus
\[
        C_K\to u^*
        \quad\text{pointwise on }\bOmega .
\]

For $K\geq j$ define
\[
        U_{j,K}
        =
        \max\{\widetilde u_j,\ldots,\widetilde u_K,C_K\}.
\]
Then $U_{j,K}\in\PSH(\Omega)\cap C(\bOmega)$,
$(\ddc U_{j,K})^n\geq f\,dV$.  The preceding comparison gives
$\widetilde u_{K+1}\leq C_K$, and the strengthened comparison gives
$C_{K+1}\leq C_K$; hence $U_{j,K}$ decreases as $K\to\infty$.  Let
\[
        u_j=\sup_{m\geq j}\widetilde u_m .
\]
For each fixed point,
$\max\{\widetilde u_j,\ldots,\widetilde u_K\}$ increases to $u_j$, while
$C_K\to u^*$.  Since $\widetilde u_m\to u^*$ pointwise, one has
$u_j\geq u^*$, and therefore the pointwise limit of $U_{j,K}$ is
$\max\{u_j,u^*\}=u_j$.  Thus $U_{j,K}\downarrow u_j$.
The functions $u_j$ are lower semicontinuous as suprema of continuous
functions, while the decreasing representation by $U_{j,K}$ shows that
they are upper semicontinuous.  Hence $u_j$ is continuous on $\bOmega$.
Since they are decreasing limits of the plurisubharmonic functions
$U_{j,K}$, we get
$u_j\in\PSH(\Omega)\cap C(\bOmega)$, and monotone convergence gives
$(\ddc u_j)^n\geq f\,dV$.  Finally,
$u_j\downarrow u^*$ on $\bOmega$, since
$\widetilde u_m\to u^*$ pointwise on $\bOmega$.
\end{proof}

As a corollary of the approximation theorem and the dual formulas, we can give
an alternative proof of the continuity of the Bedford--Taylor solution $P_f^b
\varphi$ to the Dirichlet problem with continuous boundary data and measures
with continuous density. This result is well-known~\cite{BedfordTaylor:82}, but
the duality approach provides a new perspective on the problem.

\begin{corollary}[Bedford--Taylor continuity from duality]\label{cor:bt-continuity}
Let $\Omega\subset\C^n$ be a bounded B-regular domain and let
$f\in C(\bOmega)$, $f\geq0$.  For $\varphi\in C(\partial\Omega)$, let
$P_f^c\varphi$ denote the envelope obtained from
$\PSH(\Omega)\cap C(\bOmega)$, and let $P_f^b\varphi$ denote the
bounded Bedford--Taylor envelope above.  Then
\[
        P_f^c\varphi=P_f^b\varphi
        \quad\text{in }\Omega.
\]
Consequently $P_f^c\varphi$ is continuous in $\Omega$.
\end{corollary}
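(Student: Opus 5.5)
The plan is to compare the two envelopes through their dual formulas rather than competitor by competitor. The inequality $P_f^c\varphi\leq P_f^b\varphi$ is immediate, because $\calP_f\subset\calP_f^b$. For a continuous $u$ the regularization $u^*$ agrees with $u$ on $\partial\Omega$, so every continuous competitor is already a bounded competitor. For the reverse inequality I will work on the dual side of Theorem~\ref{thm:psh} and Proposition~\ref{prop:psh-bounded-duality}. It is enough to show two facts for each fixed $x\in\Omega$:
\[
        J_{x,\PSH}^{b,\partial}=J_{x,\PSH}^{\partial}
        \qquad\text{and}\qquad
        B_f^b(x,\mu)=B_f^{\PSH}(x,\mu)
        \quad\text{for }\mu\in J_{x,\PSH}^{\partial}.
\]
Once these hold, the two dual infima are identical and hence so are the envelopes.

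\emph{Equality of the Jensen classes.} The inclusion $J_{x,\PSH}^{b,\partial}\subset J_{x,\PSH}^{\partial}$ is trivial, since continuous plurisubharmonic functions are bounded and satisfy $v^*=v$ on $\partial\Omega$. For the converse, take $\mu\in J_{x,\PSH}^{\partial}$ and $v\in\PSH(\Omega)\cap L^\infty(\Omega)$. Apply Theorem~\ref{thm:psh-inhom-approx} with $f=0$; this yields continuous plurisubharmonic $v_j\searrow v^*$ on $\bOmega$. Then $v(x)\leq v_j(x)\leq\int v_j\,d\mu$, and monotone convergence gives $\int v_j\,d\mu\downarrow\int v^*\,d\mu$. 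Hence $v(x)\leq\int v^*\,d\mu$.

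\emph{Equality of the deficits.} The inclusion $\calP_f\subset\calP_f^b$ gives $B_f^b\leq B_f^{\PSH}$ at once. For the reverse, take $u\in\calP_f^b$ and let $u_j\in\calP_f$ be the approximants from Theorem~\ref{thm:psh-inhom-approx}, so that $u_j\searrow u^*$. Then
\[
        \int u_j\,d\mu-u_j(x)\;\longrightarrow\;\int u^*\,d\mu-u^*(x),
\]
where the boundary integral converges by monotone convergence. In the interior $u^*=u$, because $u$ is plurisubharmonic and hence upper semicontinuous on $\Omega$. It follows that $B_f^{\PSH}(x,\mu)\leq\int u^*\,d\mu-u(x)$, and taking the infimum over $u\in\calP_f^b$ gives $B_f^{\PSH}\leq B_f^b$.

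This yields $P_f^c\varphi=P_f^b\varphi$ in $\Omega$. A more direct route would approximate a single bounded competitor $u$ by the $u_j$. That does not work naively, because $u_j|_{\partial\Omega}$ decreases only pointwise to $u^*\leq\varphi$ and need not lie below $\varphi$. The dual route avoids this issue completely. Alternatively one could use Dini on the compact set $\partial\Omega$ with $\varphi+\varepsilon$ and Lemma~\ref{lem:boundary-stability}. Since $u_j$ is continuous and decreases to the usc function $u^*\le\varphi$, for each $\varepsilon$ one eventually has $u_j<\varphi+\varepsilon$ on $\partial\Omega$ by compactness. This direct argument also works and can serve as a check.

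\emph{Continuity.} $P_f^b\varphi$ is lower semicontinuous in $\Omega$, being a supremum of continuous functions once it equals $P_f^c\varphi$. For upper semicontinuity, note that $(P_f^b\varphi)^*$ is again a bounded plurisubharmonic subsolution. This uses Choquet's lemma together with Proposition~\ref{lem:bt-stability-regularization}(i),(ii) to get $(\ddc (P_f^b\varphi)^*)^n\geq f\,dV$. Its boundary values are at most $\varphi$, by comparison with the barrier $q+h$, where $h$ is the maximal function with data $\varphi-q$ (continuous by B-regularity). Hence $(P_f^b\varphi)^*\leq P_f^b\varphi$, so $P_f^b\varphi$ is upper semicontinuous and therefore continuous. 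The main obstacle I expect is this boundary control of the upper regularization: one must check that the barrier from the proof of Theorem~\ref{thm:psh-inhom-approx} dominates all competitors, so that $(P_f^b\varphi)^*|_{\partial\Omega}\leq\varphi$.
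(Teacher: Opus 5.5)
Your proof that $P_f^c\varphi=P_f^b\varphi$ is correct and is essentially the paper's argument. It has the same four steps: the trivial inclusion, equality of the Jensen classes, equality of the deficits via Theorem~\ref{thm:psh-inhom-approx} and monotone convergence, and comparison of the two dual formulas.

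Two of your variations are worth keeping:
\begin{itemize}
\item You obtain $J_{x,\PSH}^{b,\partial}=J_{x,\PSH}^{\partial}$ from Theorem~\ref{thm:psh-inhom-approx} with $f=0$, rather than citing Wikstr\"om's approximation theorem. This makes the argument self-contained.
\item Your Dini argument is valid. The closed sets $\{\zeta\in\partial\Omega: u_j(\zeta)-\varphi(\zeta)\geq\varepsilon\}$ decrease to the empty set, so $u_j-\varepsilon$ is eventually a continuous competitor and $u(x)\leq u_j(x)\leq P_f^c\varphi(x)+\varepsilon$. So the approximation theorem alone gives the equality of envelopes; neither dual formula nor the Jensen-class comparison is needed for it.
\end{itemize}

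\textbf{The gap is in the continuity step, exactly where you expected trouble, and your proposed barrier fails.} Write $U=P_f^b\varphi$, and let $h$ be maximal with boundary data $\varphi-q$.
\begin{itemize}
\item The function $q+h$ is continuous on $\bOmega$, equals $\varphi$ on $\partial\Omega$, and satisfies $(\ddc(q+h))^n\geq(\ddc q)^n\geq f\,dV$. So it belongs to $\calP_f$ and lies \emph{below} $P_f^c\varphi$.
\item It is therefore a lower barrier, and it cannot dominate all competitors unless it equals the envelope. For instance, when $f=0$ the maximal function $h_\varphi$ with boundary values $\varphi$ is a competitor and $q+h\leq h_\varphi$. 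If also $h_\varphi\leq q+h$, then $(\ddc h_\varphi)^n\geq(\ddc q)^n>0$, contradicting maximality.
\item In the proof of Theorem~\ref{thm:psh-inhom-approx}, $b_m=q+h_m+m\rho$ dominates the single function being approximated only in a collar $V_m$. That collar depends on the function, through upper semicontinuity and the strict inequality $\psi_m>\psi$. Nothing there is uniform over the family of competitors, which is what control of $\limsup_{z\to\zeta}U(z)$ requires.
\end{itemize}

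The correct upper barrier is $h_\varphi$ itself, without the $q$-correction. Every bounded plurisubharmonic $u$ with $u^*|_{\partial\Omega}\leq\varphi$ is one of the functions whose supremum defines $h_\varphi$, so $u\leq h_\varphi$. B-regularity makes $h_\varphi$ continuous on $\bOmega$ with boundary values $\varphi$. Hence $U^*\leq h_\varphi$, and the boundary trace of $U^*$ is at most $\varphi$. This is how the paper argues.

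There is a smaller problem in the same step. After Choquet's lemma and part (i) of Proposition~\ref{lem:bt-stability-regularization}, you have an \emph{increasing} sequence $v_k=\max\{u_1,\ldots,u_k\}$ with $(\lim_k v_k)^*=U^*$. Part (ii) is the decreasing-sequence theorem and does not apply here. You need Bedford--Taylor convergence for increasing locally bounded sequences, which the paper invokes at this point. With these two corrections your continuity argument coincides with the paper's.
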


\begin{proof}
First note that the bounded envelope is already upper semicontinuous.
Let $U=P_f^b\varphi$ and let $U^*$ be its upper semicontinuous
regularization in $\Omega$.  The strict subsolution lemma gives a
bounded admissible competitor, and the boundary maximum principle gives
a uniform upper bound, so $U$ is finite.  Since $\calP_f^b$ is
stable under maxima, Choquet's lemma and Bedford--Taylor continuity for
increasing locally bounded sequences \cite[Proposition~5.2]{BedfordTaylor:82}
show that
\[
        U^*\in\PSH(\Omega)\cap L^\infty(\Omega),
        \qquad
        (\ddc U^*)^n\geq f\,dV .
\]
Let $h\in\PSH(\Omega)\cap C(\bOmega)$ be the maximal
plurisubharmonic function with boundary values $\varphi$, which exists
by B-regularity.  Every bounded competitor is dominated by $h$, and
hence $U^*\leq h$ in $\Omega$.  Thus the boundary trace of $U^*$ is at
most $\varphi$, so $U^*$ is itself an admissible bounded competitor.
It follows that $U^*\leq U$, while the reverse inequality is automatic.
Hence $P_f^b\varphi=U=U^*$.

The inequality $P_f^c\varphi\leq P_f^b\varphi$ is immediate, since the
continuous competitors form a subclass of the bounded competitors.

The reverse inequality follows from the dual formula.  On a B-regular
domain, Jensen measures for continuous plurisubharmonic functions and
for bounded plurisubharmonic functions coincide; this is the
approximation theorem behind the usual maximal case
\cite[Theorem~4.1 and Corollary~4.3]{Wikstrom:01}.  It remains only to
compare the deficits.  Since
$\calP_f\subset\calP_f^b$, one has
\[
        B_f^b(x,\mu)\leq B_f^{\PSH}(x,\mu).
\]
Conversely, let $u\in\calP_f^b$.  By
Theorem~\ref{thm:psh-inhom-approx}, there are
$u_j\in\PSH(\Omega)\cap C(\bOmega)$ with
$(\ddc u_j)^n\geq f\,dV$ and $u_j\searrow u^*$ on $\bOmega$.  Hence, by
monotone convergence,
\[
        B_f^{\PSH}(x,\mu)
        \leq
        \lim_{j\to\infty}
        \left(
        \int_{\partial\Omega}u_j\,d\mu-u_j(x)
        \right)
        =
        \int_{\partial\Omega}u^*\,d\mu-u(x).
\]
Taking the infimum over $u\in\calP_f^b$ gives
$B_f^{\PSH}(x,\mu)\leq B_f^b(x,\mu)$, and therefore the deficits agree.
The continuous dual formula, Theorem~\ref{thm:psh}, and the bounded
dual formula, Proposition~\ref{prop:psh-bounded-duality}, now give
$P_f^c\varphi=P_f^b\varphi$.

The common envelope is lower semicontinuous as a supremum of continuous
competitors and upper semicontinuous by the first paragraph.  This
proves interior continuity.  The final boundary statement follows from
the equality of the two envelopes and the corresponding
Bedford--Taylor properties of the bounded envelope.
\end{proof}

\section{Optimal Jensen Measures and Linearized Harmonic Measure}
\label{sec:linearized-harmonic-measure}

The preceding dual formulas identify the value of the envelope by
minimizing over Jensen measures.  In the smooth strictly elliptic case,
these minimizing measures have a concrete PDE interpretation: they are
the harmonic measures for the linearized Monge--Amp\`ere operator at the
solution.  This links the deficit formula to the linearized
Monge--Amp\`ere theory of Caffarelli--Guti{\'e}rrez
\cite{CaffarelliGutierrez:97}; the underlying smooth Dirichlet theory is
classical in the real and complex settings
\cite{CaffarelliKohnNirenbergSpruck:85,CaffarelliNirenbergSpruck:84}.

\begin{theorem}[Linearized harmonic measure, real case]
\label{thm:real-linearized-harmonic-measure}
Let $\Omega\subset\R^n$ be a bounded smooth uniformly convex domain,
let $f\in C^\infty(\bOmega)$ be positive, and let
$u\in C^\infty(\bOmega)$ be strictly convex with
\[
        \det D^2u=f
        \quad\text{in }\Omega,
        \qquad
        u|_{\partial\Omega}=\varphi .
\]
Set
\[
        L_u h=\operatorname{cof}(D^2u):D^2h,
\]
and let $\omega_u^x$ be the harmonic measure for $L_u$ at
$x\in\Omega$, i.e.
\[
        h(x)=\int_{\partial\Omega}g\,d\omega_u^x
\]
whenever $L_u h=0$ in $\Omega$ and $h|_{\partial\Omega}=g$.  Then
$\omega_u^x\in J_x^\partial$ and
\[
        B_f(x,\omega_u^x)
        =
        \int_{\partial\Omega}\varphi\,d\omega_u^x-u(x).
\]
Consequently $u(x)=P_f\varphi(x)$ and $\omega_u^x$ attains the infimum
in Theorem~\ref{thm:real}.
\end{theorem}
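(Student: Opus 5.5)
The plan is to verify the two claims separately: first that $\omega_u^x$ is a boundary Jensen measure, then that $u$ itself realizes the deficit $B_f(x,\omega_u^x)$. The final assertion then follows from Theorem~\ref{thm:real} together with weak duality.

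\emph{Step 1: $\omega_u^x\in J_x^\partial$.} Since $u$ is smooth and strictly convex up to the boundary, $\operatorname{cof}(D^2u)$ is smooth and uniformly positive definite on $\bOmega$, so $L_u$ is a uniformly elliptic operator with smooth coefficients and no zero-order term. Hence the Dirichlet problem is classically solvable and the maximum principle holds. It follows that $\omega_u^x$ is a positive measure on $\partial\Omega$. Constants and affine functions satisfy $L_u\ell=0$, which gives total mass one and $\int_{\partial\Omega}y\,d\omega_u^x(y)=x$. By Remark~\ref{rem:convex-specialization}, $\omega_u^x\in J_x^\partial$.

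\emph{Step 2: the key linearization inequality, first for smooth competitors.} Let $v\in\calA_f$ and put $w=v-u$. Concavity of $A\mapsto\det(A)^{1/n}$ on positive semidefinite matrices, differentiated at the positive definite matrix $D^2u$, gives
\[
 \det(D^2v)^{1/n}\leq \det(D^2u)^{1/n}+\tfrac1n\det(D^2u)^{1/n-1}\operatorname{cof}(D^2u):D^2w .
\]
Since $\det D^2v\geq f=\det D^2u>0$, this yields $L_uw\geq0$. Let $H$ be the $L_u$-harmonic function with boundary values $w|_{\partial\Omega}$. The maximum principle gives $w\leq H$, so
\[
 w(x)\leq\int_{\partial\Omega}w\,d\omega_u^x .
\]
Rearranging, and using $u|_{\partial\Omega}=\varphi$,
\[
 \int_{\partial\Omega}v\,d\omega_u^x-v(x)\geq\int_{\partial\Omega}\varphi\,d\omega_u^x-u(x).
\]
Taking $v=u$ gives equality, so once the inequality holds for every competitor the infimum defining $B_f(x,\omega_u^x)$ is attained at $u$ and equals the stated value.

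\emph{Step 3 (main obstacle): nonsmooth Alexandrov competitors.} A general $v\in\calA_f$ is only continuous and convex, so Step 2 must be carried out in the viscosity sense. I would use the standard fact that, for continuous $f$, an Alexandrov subsolution is a viscosity subsolution of $\det D^2v\geq f$ (Guti\'errez).

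Now let a smooth $\phi$ touch $w$ from above at $z\in\Omega$. Then $\psi=u+\phi$ touches $v$ from above at $z$. If $\ell$ is a supporting affine function of $v$ at $z$, then $\psi-\ell\geq v-\ell\geq0$ with equality at $z$, so $D^2\psi(z)\geq0$. The viscosity inequality then gives $\det D^2\psi(z)\geq f(z)$. The pointwise concavity inequality at $z$ yields $L_u\phi(z)\geq0$, so $w$ is a viscosity subsolution of the linear equation $L_uw\geq0$.

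Viscosity subsolutions of linear uniformly elliptic equations with smooth coefficients obey the comparison principle against the classical solution $H$. This gives $w\leq H$, and Step 2 goes through unchanged. If that route proved awkward, my fallback would be to mollify $v$ on interior subdomains using the estimate in Lemma~\ref{lem:real-structural} and then let the subdomains exhaust $\Omega$. I expect this to be more delicate because of the boundary behaviour.

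\emph{Step 4: conclusion.} With $B_f(x,\omega_u^x)=\int_{\partial\Omega}\varphi\,d\omega_u^x-u(x)$ in hand, Theorem~\ref{thm:real} gives
\[
 P_f\varphi(x)\leq\int_{\partial\Omega}\varphi\,d\omega_u^x-B_f(x,\omega_u^x)=u(x).
\]
Since $u$ is itself an admissible competitor with boundary values $\varphi$, we also have $u(x)\leq P_f\varphi(x)$. Hence $u(x)=P_f\varphi(x)$ and $\omega_u^x$ attains the infimum.
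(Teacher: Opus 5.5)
Your proof is correct, but its key step differs from the paper's.

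\textbf{The paper's route.} It works at the level of measures. For $w\in\calA_f$ it treats $L_u w=\operatorname{cof}(D^2u):D^2w$ as a Radon measure and splits $D^2w$ into an absolutely continuous part $A\,dx$ and a positive semidefinite singular part. It then uses that $\MA_{\R}(w)\geq f\,dx$ forces $\det A\geq f$ a.e. The matrix AM--GM inequality gives $L_uw\geq nf=L_uu$ as measures. This is the same inequality as your tangent-plane bound for $\det^{1/n}$, since $\operatorname{cof}(A):A=n\det A$. A weak maximum principle for distributional subsolutions finishes the argument.

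The paper also checks the Jensen property directly for every continuous convex $v$, via $L_uv\geq0$. You only test affine functions and invoke Remark~\ref{rem:convex-specialization}. That is more economical and fully sufficient, given how $J_x^\partial$ is defined.

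\textbf{What your route buys.} Your viscosity argument replaces two measure-theoretic inputs:
\begin{enumerate}[label=(\alph*)]
\item the identification of the absolutely continuous part of the Alexandrov measure with $\det A$, which is nontrivial;
\item the maximum principle for distributional subsolutions.
\end{enumerate}
In their place you use the standard implication from Alexandrov to viscosity subsolutions, plus an elementary perturbation argument for linear viscosity subsolutions. You only need the determinant inequality at a single touching point.

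\textbf{One detail to fix.} In Guti\'errez's definition the test functions must be convex, while your $\psi=u+\phi$ is only known to satisfy $D^2\psi(z)\geq0$ at the point. Replace $\psi$ by its second-order Taylor polynomial at $z$ plus $\epsilon|y-z|^2$. This is convex and still touches $v$ from above near $z$; then let $\epsilon\to0$.

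\textbf{What the paper's route buys.} The measure-level inequality $L_uw\geq L_uu$ is what pairs naturally with the Green kernel of $L_u$ in the stress picture. The same argument also transfers verbatim to the complex case. There Dinew's mixed inequality gives $n\,\ddc w\wedge(\ddc u)^{n-1}\geq nf\,dV$ for Bedford--Taylor competitors, with no need for a viscosity theory of the complex Monge--Amp\`ere operator.
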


\begin{proof}
Set $a^{ij}=\operatorname{cof}(D^2u)_{ij}$.  The matrix $a^{ij}$ is
smooth and uniformly elliptic.  We shall use the following weak
comparison fact: if $\eta\in C(\bOmega)$ and
$a^{ij}D_{ij}\eta$ is a nonnegative distribution, then $\eta$ is bounded
above by the $L_u$-harmonic extension of its boundary values.  This is
the standard maximum principle for distributional subsolutions of a
smooth uniformly elliptic operator; one may prove it by interior
mollification and boundary barriers, or quote the weak maximum principle
in \cite[Chapter~8]{GilbargTrudinger:01}.

If $v\in C(\bOmega)\cap\Conv(\Omega)$, then $D^2v$ is a positive
semidefinite matrix-valued measure.  Since $a^{ij}$ is positive
definite, $L_u v\geq0$ in the sense of distributions.  Comparing $v$
with its $L_u$-harmonic extension gives
\[
        v(x)\leq\int_{\partial\Omega}v\,d\omega_u^x,
\]
so $\omega_u^x$ is a boundary Jensen measure.

Let $w\in\calA_f$.  Since $\operatorname{cof}(D^2u)$ is smooth,
$\operatorname{cof}(D^2u):D^2w$ is a Radon measure.  Writing the
absolutely continuous part of $D^2w$ as $A\,dx$, the inequality
$\MA_{\R}(w)\geq f\,dx$ gives $\det A\geq f$ a.e.  The singular part of
$D^2w$ is positive semidefinite.  For the absolutely continuous part,
the matrix AM--GM inequality and
$\det\operatorname{cof}(D^2u)=(\det D^2u)^{n-1}$ give
\[
        \operatorname{cof}(D^2u):A
        \geq
        n(\det D^2u)^{(n-1)/n}(\det A)^{1/n}
        \geq
        n f .
\]
Therefore
\[
        L_u w
        =
        \operatorname{cof}(D^2u):D^2w
        \geq
        n f
        =
        L_u u
\]
as an inequality of measures.  Hence $L_u(w-u)\geq0$ in the weak sense,
and the same comparison principle applied to $w-u$ yields
\[
        w(x)-u(x)
        \leq
        \int_{\partial\Omega}(w-\varphi)\,d\omega_u^x .
\]
Equivalently,
\[
        \int_{\partial\Omega}w\,d\omega_u^x-w(x)
        \geq
        \int_{\partial\Omega}\varphi\,d\omega_u^x-u(x).
\]
Taking the infimum over $w\in\calA_f$ gives one inequality for the
deficit, while the choice $w=u$ gives equality.  If in addition
$w|_{\partial\Omega}\leq\varphi$, the preceding comparison gives
$w\leq u$, so $P_f\varphi=u$.  The final assertion follows from
Theorem~\ref{thm:real}.
\end{proof}

\begin{theorem}[Linearized harmonic measure, complex case]
\label{thm:complex-linearized-harmonic-measure}
Let $\Omega\subset\C^n$ be a bounded strongly pseudoconvex domain with
smooth boundary, let $f\in C^\infty(\bOmega)$ be positive, and let
$u\in C^\infty(\bOmega)$ be strictly plurisubharmonic with
\[
        (\ddc u)^n=f\,dV
        \quad\text{in }\Omega,
        \qquad
        u|_{\partial\Omega}=\varphi .
\]
Define the linearized complex Monge--Amp\`ere operator by
\[
        L_u h\,dV
        =
        n\,\ddc h\wedge(\ddc u)^{n-1},
\]
and let $\omega_u^x$ be the harmonic measure for $L_u$ at $x$.  Then
$\omega_u^x\in J_{x,\PSH}^{\partial}$ and
\[
        B_f^{\PSH}(x,\omega_u^x)
        =
        \int_{\partial\Omega}\varphi\,d\omega_u^x-u(x).
\]
Consequently $u(x)=P_f^{\PSH}\varphi(x)$ and $\omega_u^x$ attains the
infimum in Theorem~\ref{thm:psh}.
\end{theorem}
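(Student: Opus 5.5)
We follow the proof of Theorem~\ref{thm:real-linearized-harmonic-measure} step by step. The real cofactor matrix is replaced by the positive $(n-1,n-1)$-form $(\ddc u)^{n-1}$, and the real matrix AM--GM inequality is replaced by its Hermitian analogue (equivalently, Dinew's mixed inequality).

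\emph{Properties of $L_u$.} Since $u$ is smooth and strictly plurisubharmonic on $\bOmega$, the form $(\ddc u)^{n-1}$ is smooth and strictly positive. Hence $L_u h=\sum a^{j\bar k}h_{j\bar k}$ (up to the fixed normalization) is a second-order operator with smooth, uniformly positive definite Hermitian coefficients. Viewed as a real operator it is uniformly elliptic and has no zeroth-order term. The Dirichlet problem for $L_u$ on the smooth domain $\Omega$ is therefore uniquely solvable for continuous boundary data, and $\omega_u^x$ is well defined. As in the real proof, we use the weak maximum principle: if $\eta\in C(\bOmega)$ and $\ddc\eta\wedge(\ddc u)^{n-1}\geq0$ as a distribution, then $\eta$ is dominated by the $L_u$-harmonic extension of $\eta|_{\partial\Omega}$. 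This can be justified by interior mollification together with boundary barriers, as in \cite[Chapter~8]{GilbargTrudinger:01}.

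\emph{Jensen property.} Let $v\in\PSH(\Omega)\cap C(\bOmega)$. Then $\ddc v$ is a positive $(1,1)$-current, and its wedge product with the smooth strictly positive form $(\ddc u)^{n-1}$ is a positive measure, so $L_u v\geq0$. The weak maximum principle gives $v(x)\leq\int_{\partial\Omega}v\,d\omega_u^x$. Since $v$ was arbitrary, $\omega_u^x\in J_{x,\PSH}^{\partial}$.

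\emph{Deficit inequality.} Let $w\in\calP_f$. We must show
\[
        n\,\ddc w\wedge(\ddc u)^{n-1}\geq n f\,dV=L_u u\,dV ,
\]
where the right-hand identity holds because $n\,\ddc u\wedge(\ddc u)^{n-1}=n(\ddc u)^n=nf\,dV$. For the left-hand inequality we apply Dinew's mixed inequality \cite[Theorem~1.3]{Dinew:09}, quoted in the proof of Lemma~\ref{lem:complex-structural}, with $u_1=w$ and $u_2=\cdots=u_n=u$. This gives
\[
        \ddc w\wedge(\ddc u)^{n-1}\geq (f\cdot f^{n-1})^{1/n}\,dV=f\,dV .
\]
This step avoids decomposing the current $\ddc w$ into absolutely continuous and singular parts, which would otherwise be the delicate point. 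Alternatively, one can first take the absolutely continuous part, apply the Hermitian AM--GM inequality to $\Theta^{1/2}H\Theta^{1/2}$, and use the positivity of the singular part. The $L^1$ density of $(\ddc w)^n$ does not, however, directly control $\det$ of the absolutely continuous part of $\ddc w$, so Dinew's inequality is the safer route.

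It follows that $L_u(w-u)\geq0$ in the weak sense. The weak maximum principle then gives
\[
        w(x)-u(x)\leq\int_{\partial\Omega}(w-\varphi)\,d\omega_u^x ,
\]
that is,
\[
        \int_{\partial\Omega}w\,d\omega_u^x-w(x)\geq\int_{\partial\Omega}\varphi\,d\omega_u^x-u(x).
\]
Taking the infimum over $w\in\calP_f$ gives $B_f^{\PSH}(x,\omega_u^x)\geq\int_{\partial\Omega}\varphi\,d\omega_u^x-u(x)$. The choice $w=u\in\calP_f$ gives the reverse inequality, so equality holds.

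\emph{Conclusion.} If moreover $w|_{\partial\Omega}\leq\varphi$, the same comparison gives $w(x)\leq u(x)$. Since $u$ is itself admissible, $u(x)=P_f^{\PSH}\varphi(x)$. By the equality just proved,
\[
        \int_{\partial\Omega}\varphi\,d\omega_u^x-B_f^{\PSH}(x,\omega_u^x)=u(x)=P_f^{\PSH}\varphi(x),
\]
so by Theorem~\ref{thm:psh} the measure $\omega_u^x$ attains the infimum.

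The main obstacle is the weak maximum principle when $L_u\eta$ is only a nonnegative measure and $\eta$ is merely continuous up to the boundary. We would handle it by mollifying $\eta$ on subdomains $\Omega_\delta$ and comparing the mollifications with $L_u$-harmonic functions there, using that $L_u$ has smooth coefficients so that commutator errors vanish in the limit. We would then let $\delta\to0$, using continuity of $\eta$ on $\bOmega$ and the regularity of the smooth boundary for $L_u$.
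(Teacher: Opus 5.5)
Your proposal is correct and follows the paper's proof essentially step for step: the weak maximum principle for $L_u$ gives the Jensen property, Dinew's mixed inequality with $u_1=w$ and $u_2=\cdots=u_n=u$ gives $L_u(w-u)\geq0$, and the choice $w=u$ gives equality in the deficit, hence $u(x)=P_f^{\PSH}\varphi(x)$ and optimality of $\omega_u^x$ via Theorem~\ref{thm:psh}. The paper simply cites the weak maximum principle. If you write out your sketch of it, be aware that for a merely continuous $\eta$ the mollification commutator need not tend to zero uniformly. For the functions you actually use, however, positivity of $\ddc w$ controls the error multiplicatively: since $(\ddc u)^{n-1}$ varies only by a factor $1+O(\varepsilon)$ over an $\varepsilon$-ball, one gets $L_u(w*\rho_\varepsilon)\geq(1-C\varepsilon)\,n\,(f*\rho_\varepsilon)$.
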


\begin{proof}
The operator $L_u$ has smooth uniformly elliptic coefficients.  As in
the real case, we use the weak maximum principle for continuous
functions whose image under $L_u$ is a nonnegative measure; it follows
from the smooth maximum principle by local regularization and a barrier
argument at the boundary, or from the weak maximum principle
\cite[Chapter~8]{GilbargTrudinger:01}.

If $v\in\PSH(\Omega)\cap C(\bOmega)$, then $\ddc v\geq0$ as a positive
current, and hence $L_u v\geq0$ as a measure.  Comparing $v$ with the
$L_u$-harmonic extension of its boundary values gives
\[
        v(x)\leq\int_{\partial\Omega}v\,d\omega_u^x,
\]
so $\omega_u^x$ is a plurisubharmonic Jensen measure.

Let $w\in\calP_f$.  The mixed Monge--Amp\`ere inequality
\cite[Theorem~1.3]{Dinew:09}, applied to $w$ and $n-1$ copies of $u$,
gives
\[
        L_u w\,dV
        =
        n\,\ddc w\wedge(\ddc u)^{n-1}
        \geq
        n f\,dV
        =
        L_u u\,dV .
\]
Thus $L_u(w-u)\geq0$ as a measure, and the same comparison argument
gives
\[
        \int_{\partial\Omega}w\,d\omega_u^x-w(x)
        \geq
        \int_{\partial\Omega}\varphi\,d\omega_u^x-u(x).
\]
The choice $w=u$ gives equality in the deficit.  If
$w|_{\partial\Omega}\leq\varphi$, the comparison also gives $w\leq u$,
so $P_f^{\PSH}\varphi=u$.  The dual optimality follows from
Theorem~\ref{thm:psh}.
\end{proof}

\begin{corollary}[Boundary derivative of the solution operator]
\label{cor:boundary-derivative}
In either smooth setting of
Theorem~\ref{thm:real-linearized-harmonic-measure} or
Theorem~\ref{thm:complex-linearized-harmonic-measure}, suppose
$\varphi_t=\varphi+t\psi$ and that the corresponding solutions $u_t$
exist smoothly for $|t|$ small, with the same right-hand side $f$.  Then
\[
        \left.\frac{d}{dt}\right|_{t=0}u_t(x)
        =
        \int_{\partial\Omega}\psi\,d\omega_u^x .
\]
\end{corollary}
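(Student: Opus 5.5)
The plan is to differentiate the Monge--Amp\`ere equation in $t$ and read off the derivative with the $L_u$-harmonic measure from Theorem~\ref{thm:real-linearized-harmonic-measure} or Theorem~\ref{thm:complex-linearized-harmonic-measure}.

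Set $\dot u=\partial_t u_t|_{t=0}$. In the real case, differentiate $\det D^2u_t=f$ using Jacobi's formula $\frac{d}{dt}\det M_t=\operatorname{cof}(M_t):\dot M_t$. This gives
\[
        \operatorname{cof}(D^2u):D^2\dot u=0
        \quad\text{in }\Omega,
        \qquad
        \dot u|_{\partial\Omega}=\psi ,
\]
that is, $L_u\dot u=0$ with boundary values $\psi$. In the complex case, differentiate $(\ddc u_t)^n=f\,dV$ by multilinearity. This gives $n\,\ddc\dot u\wedge(\ddc u)^{n-1}=0$, so again $L_u\dot u=0$ and $\dot u=\psi$ on $\partial\Omega$. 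The defining property of $\omega_u^x$ as the harmonic measure for $L_u$ then yields $\dot u(x)=\int_{\partial\Omega}\psi\,d\omega_u^x$.

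The main obstacle is justifying the differentiation: we need $t\mapsto u_t$ to be differentiable in $C^2(\bOmega)$, with derivative equal to the $L_u$-harmonic extension $h$ of $\psi$. The hypothesis only says that smooth solutions exist. I would therefore avoid the implicit function theorem and use a difference-quotient argument. Set $w_t=(u_t-u)/t$. The mean value identity
\[
        \det D^2u_t-\det D^2u=\int_0^1\operatorname{cof}\bigl(D^2(u+s(u_t-u))\bigr)\,ds:D^2(u_t-u)=0
\]
shows that $w_t$ solves $L^{(t)}w_t=0$ with $w_t=\psi$ on $\partial\Omega$. Here $L^{(t)}$ is the averaged cofactor operator, which is elliptic because convex combinations of positive definite matrices are positive definite. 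The complex analogue expands $(\ddc u_t)^n-(\ddc u)^n$ as $\ddc(u_t-u)\wedge\sum_k(\ddc u_t)^k\wedge(\ddc u)^{n-1-k}$.

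If $u_t\to u$ in $C^2(\bOmega)$, a mild continuity assumption I would make explicit, then $L^{(t)}\to L_u$ uniformly and ellipticity is uniform for small $|t|$. The maximum principle gives $|w_t|\le\|\psi\|_\infty$. Schauder or $W^{2,p}$ estimates then give compactness of $w_t$, and any limit solves $L_u w=0$ with $w=\psi$ on the boundary. Uniqueness forces $w=h$, so $w_t(x)\to h(x)$. Alternatively, a cruder route needs only uniform convergence of $w_t$: compare $w_t$ with $h\pm\varepsilon$-barriers built from $L^{(t)}-L_u\to0$.

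Finally, the sign consistency can be checked against Proposition~\ref{prop:abstract-consequences} and the dual formula. We have $P_f\varphi_t(x)\le\int(\varphi+t\psi)\,d\omega_u^x-B_f(x,\omega_u^x)=u(x)+t\int\psi\,d\omega_u^x$, with equality at $t=0$. So $t\mapsto u_t(x)-t\int\psi\,d\omega_u^x$ is maximized at $t=0$, and differentiability there forces the stated derivative. This Danskin-type argument is a clean alternative proof: it needs only differentiability of $t\mapsto u_t(x)$ at $0$, not its identification, together with Theorem~\ref{thm:real} or Theorem~\ref{thm:psh} applied at $t=0$. I would present this envelope argument as the main proof, since it uses only the earlier theorems plus existence and differentiability of $u_t(x)$, and give the linearization computation as the explanation.
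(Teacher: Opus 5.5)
Your linearization computation is the paper's proof. The paper differentiates the equation at $t=0$, gets $L_u\dot u=0$ with $\dot u|_{\partial\Omega}=\psi$, and reads off $\dot u(x)$ from $\omega_u^x$; it tacitly reads ``exist smoothly'' as $C^2$-differentiable dependence on $t$.

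The envelope argument you propose as the main proof is a genuinely different and correct route. Since $u_t$ is itself a competitor for $\varphi_t$, you have $u_t(x)\le P_f\varphi_t(x)$; state this step explicitly. Weak duality with $\mu=\omega_u^x$ and the deficit identity of Theorem~\ref{thm:real-linearized-harmonic-measure} or Theorem~\ref{thm:complex-linearized-harmonic-measure} then give $u_t(x)\le u(x)+t\int_{\partial\Omega}\psi\,d\omega_u^x$ for small $t$ of both signs. Fermat's rule finishes.

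What your route buys:
\begin{itemize}
\item It uses only that theorem plus admissibility of $u_t$.
\item It needs only differentiability of the scalar map $t\mapsto u_t(x)$ at $0$, not differentiability of $t\mapsto u_t$ in $C^2(\bOmega)$.
\item It shows what is really going on: $\omega_u^x$ is a supergradient of the concave map $\varphi\mapsto P_f\varphi(x)$, which is concave as an infimum of affine functionals. So even without differentiability it traps the one-sided derivatives of $t\mapsto u_t(x)$ on either side of $\int_{\partial\Omega}\psi\,d\omega_u^x$.
\end{itemize}
In exchange, the paper's route identifies the whole derivative $\dot u$ as the $L_u$-harmonic extension of $\psi$ in one line.

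Neither the paper's computation nor your envelope argument proves that the derivative exists. The paper assumes it, and your difference-quotient discussion supplies it only under $C^2$-continuity of $t\mapsto u_t$. If you want existence inside your duality framework, swap the roles of $u$ and $u_t$. This is legitimate because each $u_t$ satisfies the hypotheses of the same theorem. It gives
\[
  t\int_{\partial\Omega}\psi\,d\omega_{u_t}^x\;\le\; u_t(x)-u(x)\;\le\; t\int_{\partial\Omega}\psi\,d\omega_u^x .
\]
So weak convergence $\omega_{u_t}^x\to\omega_u^x$ as $t\to0$, which your $C^2$-continuity assumption provides, already yields the derivative. You would not need compactness of the difference quotients.
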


\begin{proof}
Differentiate the Monge--Amp\`ere equation at $t=0$.  In the real case
this gives
\[
        \operatorname{cof}(D^2u):D^2\dot u=0,
\]
and in the complex case it gives
\[
        n\,\ddc\dot u\wedge(\ddc u)^{n-1}=0.
\]
In both cases $\dot u|_{\partial\Omega}=\psi$, so the defining property
of the harmonic measure gives the formula.
\end{proof}

The stress and current pictures of
Propositions~\ref{prop:real-stress-lower-bound}
and~\ref{prop:complex-current-lower-bound} recover the same measures.
In the real case, let $G_u(x,\cdot)$ be the Green kernel
for $L_u$, normalized so that
\[
        \int_{\partial\Omega}h\,d\omega_u^x-h(x)
        =
        \int_{\Omega}G_u(x,y)L_u h(y)\,dy .
\]
Then
\[
        \Gamma_x
        =
        G_u(x,y)\operatorname{cof}(D^2u(y))\,dy
\]
is a Jensen stress for $(x,\omega_u^x)$, and
\[
        B_f(x,\omega_u^x)
        =
        n\int_{\Omega}G_u(x,y)f(y)\,dy .
\]
In the complex case, with the normalization
$L_u h\,dV=n\,\ddc h\wedge(\ddc u)^{n-1}$, the analogous positive
current is
\[
        T_x
        =
        n\,G_u(x,\cdot)(\ddc u)^{n-1}.
\]
Thus the deficit is the potential of the Monge--Amp\`ere density against
the Green kernel of the linearized equation.

\end{document}